\documentclass[a4paper,oneside]{amsart}
\usepackage{amssymb}
\usepackage{amsthm}
\usepackage{amsmath,amscd}
\usepackage[mathscr]{euscript}
\usepackage[all]{xy}
\usepackage[utf8]{inputenc}
\usepackage{lmodern}
\usepackage[T1]{fontenc}
\usepackage[textwidth=14cm,hcentering]{geometry}
\usepackage[colorlinks=true,linkcolor=red,citecolor=blue]{hyperref}
\usepackage{mathtools}
\usepackage{tikz}
\usetikzlibrary{arrows,positioning}

\newtheorem{theo}{Theorem}[section]
\newtheorem{lemm}[theo]{Lemma}
\newtheorem{prop}[theo]{Proposition}
\newtheorem{coro}[theo]{Corollary}
\newtheorem{fact}[theo]{Fact}

\theoremstyle{definition}
\newtheorem{defi}[theo]{Definition}
\newtheorem{cons}[theo]{Construction}
\newtheorem{assu}[theo]{Assumption}

\newtheorem{example}[theo]{Example}

\newtheorem{rem}[theo]{Remark}

\newtheorem*{theo*}{Theorem}

\numberwithin{equation}{section}


\newcommand{\cat}{\mathbf}
\newcommand{\on}{\operatorname}

\newcommand{\Hom}{\mathrm{Hom}}


\newcommand{\C}{\mathbf{C}}
\newcommand{\Q}{\mathbf{Q}}
\newcommand{\Z}{\mathbf{Z}}
\newcommand{\A}{\mathbf{A}}

\renewcommand{\H}{\mathrm{H}}

\newcommand{\D}{\mathbf{D}}
\newcommand{\DA}{\mathbf{DA}}
\newcommand{\DAT}{\mathbf{DAT}}
\newcommand{\DM}{\mathbf{DM}}
\newcommand{\DMT}{\mathbf{DMT}}

\newcommand{\PConf}{\mathrm{PConf}}
\newcommand{\Conf}{\mathrm{Conf}}

\newcommand{\R}{\mathbf{R}}
\newcommand{\N}{\mathbf{N}}
\newcommand{\rmod}[1]{\ensuremath{#1\text{-}\mathrm{Mod}}}
\newcommand{\htop}{\mathrm{hTop}}

\title{Motivic homological stability of configuration spaces}
\author{Geoffroy Horel}
\author{Martin Palmer}

\begin{document}

\address{Université Sorbonne Paris Nord, Laboratoire Analyse, Géométrie et Applications, CNRS (UMR 7539), 93430, Villetaneuse, France.}
\email{horel@math.univ-paris13.fr}
\address{Mathematical Institute of the Romanian Academy, 21 Calea Griviței, 010702 București, România}
\email{mpanghel@imar.ro}

\begin{abstract}
We prove that some of the classical homological stability results for configuration spaces of points in manifolds can be lifted to motivic cohomology.
\end{abstract}

\keywords{}
\maketitle

\section{Introduction}

A classical result of McDuff and Segal states that the unordered configuration spaces of a connected, open manifold $M$ are homologically stable. More precisely, let $M$ be a connected manifold homeomorphic to the interior of a manifold with non-empty boundary and write
\[
C_n(M) = \{ (x_1,\ldots,x_n) \in M^n \mid x_i \neq x_j \text{ for } i \neq j \} / \Sigma_n
\]
for the \emph{unordered configuration space} of $n$ points in $M$. The theorem of McDuff and Segal (reproven more recently by Randal-Williams) is the following.

\begin{theo}[\cite{segalconfiguration,mcduffconfiguration,segaltopology,Randal-Williams2013}]
\label{t:classical}
There are stabilization maps of the form $C_n(M) \to C_{n+1}(M)$ such that the induced maps on integral homology
\[
 \H_i(C_n(M);\Z) \longrightarrow \H_i(C_{n+1}(M);\Z) 
\]
are isomorphisms when $n \geq 2i$. 
\end{theo}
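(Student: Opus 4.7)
The plan is to follow the semi-simplicial resolution approach reintroduced by Randal-Williams. First, I would construct the stabilization map using the boundary structure. Let $\bar M$ be a compact manifold with non-empty boundary such that $M \cong \on{int}(\bar M)$, fix a basepoint $* \in \partial \bar M$ and a collar neighbourhood. The stabilization map $C_n(M) \to C_{n+1}(M)$ is defined by applying an isotopy supported in the collar that pushes the existing configuration away from $*$ and then inserting a new point close to $*$. Up to homotopy this is independent of the choices involved.

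Next, I would introduce an augmented semi-simplicial resolution $A_\bullet(n) \to C_n(M)$ where a $p$-simplex is a configuration $\xi \in C_n(M)$ together with an ordered tuple of $p+1$ pairwise disjoint embedded arcs from $*$ to $p+1$ distinct points of $\xi$, with interiors disjoint from $\xi$ and from $\partial \bar M$. Face maps forget an arc and the augmentation forgets all arcs. The key geometric input, and what I expect to be the \emph{main obstacle}, is showing that this augmentation is $\lfloor (n-1)/2 \rfloor$-connected. This is established by recognising $A_\bullet(n)$ as a topologically enhanced arc complex and combining a microfibration/contractibility argument (to straighten arcs near $*$) with an induction on $n$ analogous to classical connectivity estimates for arc and curve complexes on surfaces.

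Once the connectivity is in hand, one observes that the data of arcs is, up to weak equivalence, contractible once the endpoints in $\xi$ are fixed, giving a weak equivalence $A_p(n) \simeq C_{n-p-1}(M)$ that is compatible with stabilization maps in lower $n$. The spectral sequence associated to the skeletal filtration of $\|A_\bullet(n)\|$ then reads
\[
E^1_{p,q} = \H_q(C_{n-p-1}(M); \Z) \Longrightarrow \H_{p+q}(C_n(M); \Z)
\]
in the range $p+q \leq \lfloor (n-1)/2 \rfloor$, and the $d^1$-differentials are alternating sums of the stabilization maps. Comparing this spectral sequence for $n$ and for $n+1$, together with a double induction on $n$ and $i$, yields the isomorphism $\H_i(C_n(M); \Z) \cong \H_i(C_{n+1}(M); \Z)$ for $n \geq 2i$, completing the proof.
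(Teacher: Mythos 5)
This theorem is not proved in the paper: it is imported as a black box from the literature (Segal, McDuff, Segal's later paper, and Randal-Williams), and the paper's contribution is to lift it to the motivic setting. There is therefore no ``paper's own proof'' to compare against. That said, your sketch correctly reproduces the overall architecture of the Randal-Williams proof from the 2013 reference: stabilization via a collar of the boundary, an augmented semi-simplicial resolution by configurations decorated with arcs to the boundary, a connectivity estimate for the geometric realization of the resolution, identification of the spaces of $p$-simplices (up to weak equivalence) with lower configuration spaces, and a double induction via the skeletal spectral sequence.

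A few cautions. First, as written your simplices are not literally ``pairwise disjoint'' arcs, since they all share the endpoint $*$; in Randal-Williams' construction the arcs emanate from $p+1$ distinct points of a fixed interval in $\partial \bar M$, which also makes the semi-simplicial structure (rather than just a simplicial complex) cleaner. Second, the connectivity you posit, $\lfloor(n-1)/2\rfloor$, is not the number one typically obtains from this arc-resolution; the resolution in Randal-Williams' argument is substantially more highly connected (roughly linear in $n$ with slope $1$ rather than $1/2$), and the eventual range $n\geq 2i$ emerges from the bookkeeping in the spectral-sequence induction rather than directly from the connectivity bound. Finally, the identification $A_p(n)\simeq C_{n-p-1}(M)$ needs a microfibration or quasi-fibration argument to show the forgetful map has contractible (homotopy) fibers, which you gesture at but which is where much of the real work resides. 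None of these are fatal, but if you were to flesh this into a full proof you would want to align the connectivity statement with what the arc complex actually satisfies, and then re-derive the $n\geq 2i$ range from the spectral sequence rather than assuming it matches the connectivity bound on the nose.
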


This is part of a more general phenomenon of homological stability that holds for many other families of spaces or groups, including general linear groups \cite{Kallen1980}, mapping class groups of surfaces \cite{harerstability}, automorphism groups of free groups \cite{Hatcher1995, HatcherVogtmann1998} and moduli spaces of high-dimensional manifolds \cite{GalatiusRandal-Williams2017, GalatiusRandal-Williams2018}.

The goal of this paper is to lift Theorem \ref{t:classical} to a version for {\'e}tale and motivic cohomology. If $X$ is a smooth scheme over a number field $K$, we may consider its associated unordered configuration scheme $\mathrm{Conf}_n(X)$, as well as a stacky version $\widetilde{\Conf}_n(X)$ (described precisely in \S\ref{quotient}). When $X$ is not complete (and under a minor additional assumption), we construct, in \S\ref{stabilization}, stabilization maps
\begin{equation}
\label{eq:stab}
M(\widetilde{\Conf}_n(X)) \longrightarrow M(\widetilde{\Conf}_{n+1}(X))
\end{equation}
in the category of motives. For any choice of embedding of $K$ into $\mathbb{C}$, their Betti realizations agree with the classical stabilization maps for the unordered configuration spaces of the complex manifold $M = X_{an}$ consisting of the complex points of $X$.

We assume that $X$ may be written as $X \cong Y-D$, where $Y$ is a smooth scheme over $K$ and $D \subset Y$ is a smooth closed subscheme that has a $K$-point. Our main result is then the following.

\begin{theo}[Theorems \ref{theorem-open-schemes} and \ref{main-theorem}]
\label{t:main}
Let $X$ be as above and assume that its {\'e}tale motive $M_{et}(X)$ is mixed Tate. Assume also that $Y$ is geometrically connected. Then the maps of {\'e}tale motivic cohomology groups
\[
\H^{p,q}_{et}(\widetilde{\Conf}_{n+1}(X);\Lambda) \longrightarrow \H^{p,q}_{et}(\widetilde{\Conf}_n(X);\Lambda)
\]
induced by \eqref{eq:stab} are isomorphisms for $p \leq n/2$ and under mild conditions on the coefficient ring $\Lambda$ (see Theorem \ref{theo: Betti detects connectivity} for the precise conditions). If $X$ is a quasi-projective variety, there are isomorphisms of {\'e}tale motivic cohomology groups
\[
\H^{p,q}_{et}(\widetilde{\Conf}_n(X),\Lambda)\cong \H^{p,q}_{et}(\Conf_{n}(X),\Lambda)
\]
for any $\Lambda$, so in this case the result may be interpreted as a statement about the non-stacky configuration schemes $\mathrm{Conf}_n(X)$. In the case when $X = \A^d$ is affine space, the maps of motivic cohomology groups
\[
\H^{p,q}(\widetilde{\Conf}_{n+1}(\A^d);\Lambda) \longrightarrow \H^{p,q}(\widetilde{\Conf}_n(\A^d);\Lambda)
\]
induced by \eqref{eq:stab} are isomorphisms for $p \leq n/2 - 1$ and any coefficient ring $\Lambda$.
\end{theo}

The statement for {\'e}tale motivic cohomology is proven in \S\ref{etale} as a direct consequence of the topological case (Theorem \ref{t:classical}), a detection result for Betti realization (Theorem \ref{theo: Betti detects connectivity}) and the existence of the stabilization maps at the {\'e}tale motivic level. The proof of the statement for motivic cohomology in the case $X = \A^d$ is proven in \S\ref{motivic}, and is more indirect, since the analogous detection result does not hold in this case. Instead, we use a detection result for the associated graded of the weight filtration, and the key topological input (Lemma \ref{lem:twisted-homstab}) is a certain twisted homological stability result for the symmetric groups. 

Let us emphasize that the first part of the theorem applies in particular to $X=\A^d$ and gives us an étale motivic cohomological stability result. This is different from the second part of the theorem which is a cohomological stability result for motivic cohomology (as opposed to étale motivic cohomology). The price to pay for this finer result is that we have to work with $\widetilde{\Conf}_n$ instead of $\Conf_n$.

\begin{rem}
When $X$ is the affine line $\A^1$, the first statement of Theorem \ref{t:main} (for {\'e}tale motivic cohomology) has previously been proven in \cite{horelmotivic},\footnote{In fact, \cite{horelmotivic} also claimed to prove the second statement of Theorem \ref{t:main} (for motivic cohomology) for the affine line $X = \A^1$, but the proof contained an error; see the erratum \cite{HorelErratum}.} except that the isomorphisms (in a stable range) of \cite{horelmotivic} are induced not by motivic lifts of \emph{stabilization maps}, but rather by motivic lifts of \emph{scanning maps} $C_n(\C) \to \Omega^2_n S^2$ and of maps between the different path-components of $\Omega^2 S^2$. The paper \cite{horelmotivic} also uses a different model (denoted by $C_n$ in \cite[\S 5]{horelmotivic}) for the configuration scheme $\mathrm{Conf}_n(\A^1)$.
\end{rem}

Throughout this paper, we denote by $K$ a number field equipped with an embedding $K\to \mathbb{C}$ and we denote by $S$ the spectrum of $K$. For a smooth scheme $X$ over $S$, we denote by $X_{an}$ the set $X(\mathbb{C})$ with its complex manifold structure. 

\bigskip
\noindent\textbf{Acknowledgements.}
We are grateful to the anonymous referee for their helpful comments and careful reading of an earlier version of this article. The authors were both partially supported by a grant of the Romanian Ministry of Education and Research, CNCS - UEFISCDI, project number PN-III-P4-ID-PCE-2020-2798, within PNCDI III and by a grant of the Agence Nationale pour la Recherche, project number ANR-18-CE40-0017 PerGAMo.

\section{Quotient and stacky quotient}
\label{quotient}

Let $X$ be a smooth scheme over $S$ and $G$ be a finite group acting on the right on $X$. We denote by $X/G$ the quotient of $X$ by $G$ in the category of smooth schemes whenever it exists. A sufficient condition for this to be true is for $X$ to be a quasi-projective variety (by \cite[\href{https://stacks.math.columbia.edu/tag/07S7}{Tag 07S7}]{stacksproject}) and the action of $G$ to be free.

 We denote by $[X/G]$ the ``stacky quotient'' of $X$ by $G$. This is a simplicial object in the category of smooth $S$-schemes given by 
\[[n]\mapsto X\times G^n.\]
This is the nerve of the translation groupoid of the action of $G$ on $X$.

By mapping a fixed smooth scheme into a simplicial scheme, we can turn a simplicial scheme into a simplicial presheaf on the category of smooth schemes. We will use the same notation to denote a simplicial scheme and the associated simplicial presheaf. Observe that there is a canonical map of simplicial schemes $[X/G]\to X/G$ (where we view $X/G$ as a constant simplicial scheme).

\begin{prop}\label{prop : quotient vs stacky quotient}
Assume that  $X$ is a quasi-projective variety over $S$ equipped with a free action of a finite group $G$. Then the canonical map $[X/G]\to X/G$ is an étale weak equivalence of simplicial presheaves. 
\end{prop}

\begin{proof}
First observe that under these conditions, the quotient exists in the category of schemes and the quotient map $X\to X/G$ is an \'etale map by \cite[\href{https://stacks.math.columbia.edu/tag/07S7}{Tag 07S7}]{stacksproject}. Let $F$ be a fibrant object in simplicial presheaves with the Jardine model structure (\cite{jardinesimplical}). We denote by $f$ the map induced by precomposition with the canonical map
\[f:\mathrm{Map}(X/G,F)\to \mathrm{Map}([X/G],F).\]
We need to prove that $f$ is a weak equivalence of simplicial sets. The domain of $f$ is simply $F(X/G)$ while the codomain of $f$ is the homotopy limit of the cosimplicial diagram
\[[n]\mapsto F(X\times G^n)\] 

Now we observe that there is an isomorphism 
\[X\times_{(X/G)}X\times_{(X/G)}\ldots\times_{(X/G)}X\cong X\times G^n\]
where the fiber product on the left has $n+1$ terms and this isomorphism is compatible with the cosimplicial structures on both sides (this isomorphism appears in the proof of \cite[\href{https://stacks.math.columbia.edu/tag/07S7}{Tag 07S7}]{stacksproject}). So the map $f$ can be identified with the map
\[F(X/G)\to\mathrm{holim}_{\Delta}([n]\mapsto F(X^{\times_{(X/G)} (n+1)})).\]
The latter map is a weak equivalence since $F$ is fibrant and $X\to X/G$ is an étale cover.
\end{proof}

For $X$ a smooth scheme over $S$, we denote by $\PConf_n(X)$ the complement in $X^n$ of all the diagonals:
\[\PConf_n(X)=X^n-\cup_{i,j}\Delta_{i,j}\]
where $\Delta_{i,j}=\{(x_1,\ldots,x_n)\in X^n, x_i=x_j\}$. The $\Sigma_n$-action on $X^n$ restricts to a $\Sigma_n$-action on $\PConf_n(X)$. If $X$ is a quasi-projective smooth variety, we denote by $\Conf_n(X)$ the quotient of $\PConf_n(X)$ by the action of $\Sigma_n$. Finally, we denote by $\widetilde{\Conf}_n(X)$ the stacky quotient $[\PConf_n(X)/\Sigma_n]$. Observe that in this case the assumptions of the previous proposition are satisfied.

\begin{rem}
The letter $\mathrm{P}$ in the notation $\PConf$ stands for the word ``pure''; the reason for this notation is that, if $X$ is a smooth complex curve, the topological fundamental group of $\PConf_n(X)$ is the pure braid group of the associated Riemann surface. 
\end{rem}

\section{Motives}

\subsection{Generalities}

We shall recall briefly here the construction of the categories of motives $\DM(S,\Lambda)$ and $\DA(S,\Lambda)$. We refer the reader to \cite{mazzalecture} or \cite{cisinskitriangulated} for details about these categories. We start from the category of complexes of presheaves of $\Lambda$-modules on the site of smooth schemes over $S$ and we force descent for étale hypercovers, contractibility of the affine line and invertibility of the Tate motive for the tensor product. The resulting triangulated category is denoted $\DA(S,\Lambda)$. We define $\DM(S,\Lambda)$ in a similar fashion except that we start from the category of complexes of presheaves with transfers and we use the Nisnevich topology instead of the étale topology. Note that we could also work with presheaves with transfers and the \'etale topology and construct the category $\DM_{et}(S,\Lambda)$; however, this category is equivalent to $\DA(S,\Lambda)$ by \cite[Corollaire B.14]{ayoubalgebreI}. There is a left adjoint functor
\[\DM(S,\Lambda)\to\DA(S,\Lambda)\]
simply given by the \'etale sheafification functor $\DM(S,\Lambda)\to \DM_{et}(S,\Lambda)$ followed by the equivalence $\DM_{et}(S,\Lambda)\simeq \DA(S,\Lambda)$. This left adjoint is an equivalence of categories when $\Lambda$ is a $\mathbb{Q}$-algebra. The category $\DM(S,\Lambda)$ contains a collection of objects indexed by $\mathbb{Z}$ called the Tate twists and denoted by $\Lambda(n),n\in\mathbb{Z}$. There are analogously defined motives in $\DA(S,\Lambda)$. There is a symmetric monoidal category structure on both $\DM(S,\Lambda)$ and $\DA(S,\Lambda)$ such that the formula
\[\Lambda(i)\otimes\Lambda(j)\cong\Lambda(i+j)\]
holds for all integers $i$ and $j$.

A smooth scheme $X$ over $S$ yields an object in $\DM(S,\Lambda)$ and in $\DA(S,\Lambda)$ denoted $M(X)$ and $M_{et}(X)$ respectively. The étale motivic cohomology with coefficients in $\Lambda$ of a smooth scheme $X$ over $S$ is the bigraded collection of $\Lambda$-modules:
\[\H^{p,q}_{et}(X,\Lambda):=\Hom_{\DA(S,\Lambda)}(M_{et}(X),\Lambda(q)[p])\]
where $[p]$ denotes the shift by $p$ in a triangulated category. Likewise the motivic cohomology is given by 
\[\H^{p,q}(X,\Lambda):=\Hom_{\DM(S,\Lambda)}(M(X),\Lambda(q)[p])\]
We can also define the motivic cohomology of $[X/G]$ when $G$ is a finite group acting on a smooth scheme $X$. For this we first define $M([X/G])$ as the homotopy colimit of the simplicial object of $\DM(S,\Lambda)$ given by
\[[n]\mapsto M(X\times G^n)\]
and then we set
\[\H^{p,q}([X/G],\Lambda):=\Hom_{\DM(S,\Lambda)}(M([X/G]),\Lambda(q)[p]).\]

\begin{rem}
The careful reader will have noticed that there is an abuse of terminology in the definition above as there is no definition of the homotopy colimit of a simplicial object in a triangulated category. What we really mean is that $\DM(S,\Lambda)$ is the homotopy category of a model category (or an $\infty$-category) and that the diagram $[n]\mapsto M(X\times G^n)$ lifts to the level of model categories. We can thus take the homotopy colimit in the model category and then consider the result as an object in the homotopy category. We will allow ourselves to make this abuse in a few other places in the paper.
\end{rem}

\begin{lemm}
\label{lem:motive-of-stacky-quotient}
The object $M([X/G])$ is weakly equivalent to the homotopy orbits of the $G$-action on $M(X)$.
\end{lemm}

\begin{proof}
Recall that any category $\cat{M}$ with colimits is tensored over the category of sets via the formula
\[S\square X:= \bigsqcup_SX\]
where $S$ is a set, $X$ is an object of $\cat{M}$ and $\square$ is our notation for the tensoring.

Now assume that $\cat{M}$ is a model category and $X$ has a left action of a group $G$. Then, it is standard that the homotopy orbits of the $G$-action on $X$ are given by the homotopy colimit of the usual simplicial bar construction:
\[[n]\mapsto G^n\square X.\]

Now, in the particular case of $\cat{M}=\DM(S,\Lambda)$, we see that this simplicial object is isomorphic to the one that appeared in our definition of the motive of the stacky quotient $[X/G]$.
\end{proof}

We could define in a similar fashion the étale motive of $[X/G]$. By Proposition \ref{prop : quotient vs stacky quotient}, we have an isomorphism
\[M_{et}([X/G])\cong M_{et}(X/G)\]
whenever the assumptions of this proposition are satisfied. Since étale motivic cohomology coincides with motivic cohomology when the ring of coefficients is a $\mathbb{Q}$-algebra, we can also deduce the following theorem.

\begin{theo}
Let $\Lambda$ be a $\mathbb{Q}$-algebra. Under the assumptions of Proposition \ref{prop : quotient vs stacky quotient}, the canonical map $[X/G]\to X/G$ induces an isomorphism
\[\H^{p,q}(X/G,\Lambda)\to \H^{p,q}([X/G],\Lambda).\]
\end{theo}

Another important feature of these categories that we now recall is the so-called purity theorem.

\begin{theo}
Let $X$ be a smooth scheme and $D$ be a smooth closed subscheme of $X$ of codimension $c$, then there exists a cofiber sequence in $\DM(S,\Lambda)$.
\[M(X-D)\xrightarrow{M(i)} M(X)\to M(D)(c)[2c]\]
where $i$ denotes the open inclusion $X-D\to X$. There is a similar sequence in $\DA(S,\Lambda)$.
\end{theo}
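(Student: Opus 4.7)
The plan is to follow the standard strategy of Morel--Voevodsky, adapted to $\DM(S,\Lambda)$ and $\DA(S,\Lambda)$: reduce via deformation to the normal cone to a Thom-isomorphism computation. I would first reinterpret the claim as asserting that the cofiber of $M(i)\colon M(X-D)\to M(X)$ is naturally equivalent to $M(D)(c)[2c]$, where $c$ denotes the codimension of $D$ in $X$. Call this cofiber the pair-motive $M(X,X-D)$; the task is to produce a canonical equivalence $M(X,X-D)\simeq M(D)(c)[2c]$.

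The first substantive step is the \emph{deformation to the normal cone}: blow up $D\times\{0\}$ inside $X\times\A^1$ and remove the strict transform of $X\times\{0\}$. The resulting smooth scheme $B$ is flat over $\A^1$, contains $D\times\A^1$ as a smooth closed subscheme, and has fibre $(X,D)$ over $1\in\A^1$ and fibre $(N,D)$ over $0\in\A^1$, where $N=N_{D/X}$ is the normal bundle with $D$ embedded as its zero section. By $\A^1$-invariance of motives, restriction of the total pair-motive $M(B,B-D\times\A^1)$ to each of these two fibres yields an equivalence, producing a canonical zig-zag identifying $M(X,X-D)$ with $M(N,N-D)$.

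The second substantive step is the Thom isomorphism. Homotopy invariance along $N\to D$ gives $M(N)\simeq M(D)$, so $M(N,N-D)$ is the motivic Thom space $\on{Th}(N)$. Using Nisnevich (or Zariski) descent along a trivializing cover of $D$, one reduces to the case of the trivial bundle $D\times\A^c$; the Künneth formula then reduces further to the base computation
\[\on{cofib}\bigl(M(\A^c-0)\to M(\A^c)\bigr)\simeq \Lambda(c)[2c].\]
The latter follows from the definition $\Lambda(1)[2]\simeq\on{cofib}(M(\mathrm{pt})\to M(\mathbb{P}^1))$ of the Tate motive, together with an iterated cofiber identification of $\A^c/(\A^c-0)$ as a $c$-fold smash power of $\mathbb{P}^1$.

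The main obstacle is the descent step inside the Thom isomorphism: assembling local trivializations of $N$ into a global equivalence requires that the motivic Thom space of an algebraic vector bundle depends only on its rank, up to canonical Tate twist and shift, independently of the transition cocycles in $GL_c$. This orientability of algebraic vector bundles for motivic cohomology is precisely what forces the connecting map of the final cofiber sequence to be globally well-defined, and is handled by Voevodsky's argument using the projective bundle formula in $\DM$, with a parallel treatment in $\DA$ due to Ayoub.
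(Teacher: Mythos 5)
The paper does not give a proof of this statement; it is recalled as a known ``purity theorem'' without argument (and the statement as printed contains a typographical slip: $(d)[2d]$ should read $(c)[2c]$, where $c$ is the codimension, which you correctly infer). So there is no proof of the paper's own to compare against, only the standard literature.

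Your outline is the standard Morel--Voevodsky argument, and it is correct: deformation to the normal cone reduces the pair-motive $M(X,X-D)$ to the Thom space $\on{Th}(N_{D/X})$, and the Thom isomorphism $\on{Th}(N_{D/X})\simeq M(D)(c)[2c]$ is obtained by local trivialization, Nisnevich/Zariski descent, and the base computation $\A^c/(\A^c-\{0\})\simeq\Lambda(c)[2c]$. You also correctly flag the one non-formal point: gluing the local trivializations requires that the Thom motive of a vector bundle depend only on its rank, which is exactly the orientability of the theories $\DM(S,\Lambda)$ and $\DA(S,\Lambda)$ (equivalently the projective bundle formula). This is what separates these module categories from the unoriented $\cat{SH}(S)$, where the cofiber of $M(X-D)\to M(X)$ is only $\on{Th}(N_{D/X})$ and not in general a Tate twist of $M(D)$; since $\DM$ and $\DA$ are modules over the oriented motivic Eilenberg--MacLane spectrum, this obstruction vanishes and your sketch goes through.
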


\subsection{Betti realization}

The functor $X\mapsto X_{an}$ from smooth $S$-schemes to topological spaces induces a functor called Betti realization
\[B^*:\DA(S,\Lambda)\to \D(\Lambda)\]
and similarly a functor
\[B^*:\DM(S,\Lambda)\to \D(\Lambda)\]
We refer the reader to \cite{ayoubalgebreI} and \cite{ayoubalgebreII} for details about these two functors. The only things we will need to know about these functors is that they are symmetric monoidal left adjoints (in fact they come from left Quillen functors) and:
\begin{fact}
\label{fact-Betti-realisation}
The composite
\[B^*\circ M:\mathrm{Sm}_S\to\D(\Lambda)\]
is naturally isomorphic to the functor $X\mapsto C_*(X_{an},\Lambda)$, where $\mathrm{Sm}_S$ denotes the category of smooth schemes over $S$.
\end{fact}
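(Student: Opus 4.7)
The plan is to construct the Betti realization directly, observe that its value on a representable is $C_*(X_{an},\Lambda)$ by construction, and then verify that this property survives the sequence of localizations that produce $\DA(S,\Lambda)$ and $\DM(S,\Lambda)$. In other words, Fact \ref{fact-Betti-realisation} is, up to the descent checks, a tautology: we define $B^*$ so that the equality holds before localization and then check it passes to the quotient.

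Concretely, I would start from the analytification functor $\mathrm{Sm}_S\to\htop$, $X\mapsto X_{an}$, compose with singular chains $C_*(-;\Lambda)\colon\htop\to C(\Lambda)$, and left Kan extend along the Yoneda-type embedding $\mathrm{Sm}_S\to\mathrm{PSh}(\mathrm{Sm}_S,C(\Lambda))$. The resulting colimit-preserving functor is a left Quillen functor for the projective model structure, and by the universal property of left Kan extension its value on the representable presheaf associated to $X\in\mathrm{Sm}_S$ is exactly $C_*(X_{an},\Lambda)$.

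The core of the argument is then to check that this left Quillen functor descends through the three localizations defining $\DA(S,\Lambda)$, and, via the forgetful functor from presheaves with transfers, also through those defining $\DM(S,\Lambda)$. The three conditions to verify are: (i) étale (resp.\ Nisnevich) hyperdescent, which follows from the classical fact that singular chains of a topological space can be computed as the homotopy colimit of the Čech nerve of any open cover, together with the fact that analytification sends an étale morphism to a local homeomorphism; (ii) $\A^1$-invariance, which follows from $\A^1_{an}=\mathbb{C}$ being contractible, so that $C_*((\A^1\times X)_{an};\Lambda)\to C_*(X_{an};\Lambda)$ is a quasi-isomorphism; and (iii) invertibility of the image of the Tate motive, which holds because $(\mathbb{G}_m)_{an}\simeq S^1$, so that $\Lambda(1)$ goes to a shift of $\Lambda$, manifestly invertible in $\D(\Lambda)$. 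For the transfers setting one additionally uses that a finite correspondence $X\vdash Y$ induces a transfer map $C_*(X_{an};\Lambda)\to C_*(Y_{an};\Lambda)$ compatibly with composition.

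Once these checks are in place, $B^*$ is induced on the localizations and by construction the composite $B^*\circ M$ agrees with $C_*((-)_{an};\Lambda)$ on the generators, naturally in $X$. The main obstacle is really the étale descent step (i), which is the delicate point; however, this has been worked out in detail by Ayoub in his construction of Betti realization, and the proof would amount to a reference rather than a new computation.
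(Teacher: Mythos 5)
The paper does not actually prove Fact~\ref{fact-Betti-realisation}: it is stated as a known property of the Betti realization functor, implicitly citing the constructions of Ayoub (and Cisinski--D\'eglise), which is why it is labelled a ``Fact'' rather than a Proposition. Your proposal is essentially a correct reconstruction of how that external result is established: left Kan extension of $X\mapsto C_*(X_{an};\Lambda)$, observe its value on representables via Yoneda, and verify that the colimit-preserving functor sends the three classes of localizing morphisms to quasi-isomorphisms.

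One imprecision worth flagging: in step (i) you invoke \emph{open-cover} \v{C}ech descent, but the analytification of an \'etale cover is a surjective local homeomorphism, not an open cover in general (the maps $U_{i,an}\to X_{an}$ are open but need not be injective). The correct statement is that singular chains satisfy descent along the \v{C}ech nerve of any surjective local homeomorphism --- which follows by the same barycentric-subdivision/small-simplices argument as excision, but is not literally the open-cover statement. Moreover, promoting \v{C}ech descent to \emph{hyper}descent is a further step (a hypercompleteness issue). You correctly identify this as the delicate point and defer to Ayoub, which is the right thing to do; the argument is sound as long as one understands that the reference is doing genuine work there, not just packaging an open-cover Mayer--Vietoris. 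A smaller point: $\Lambda(1)$ actually realizes to $\Lambda$ itself (not a nontrivial shift), since $\Lambda(1)[1]$ is the reduced motive of $\mathbb{G}_m$ and $\tilde{C}_*(S^1;\Lambda)\simeq\Lambda[1]$; either way invertibility holds, so the conclusion is unaffected.
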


\subsection{Mixed Tate motives}

We denote by $\DAT(S,\Lambda)$ the smallest triagulated subcategory of $\DA(S,\Lambda)$ containing all the Tate twists $\Lambda(n)$ and closed under arbitrary coproducts and retracts. We define $\DMT(S,\Lambda)$ analogously. An object of $\DAT(S,\Lambda)$ or $\DMT(S,\Lambda)$ will be called a mixed Tate motive. Observe that the tensor product on $\DM(S,\Lambda)$ and $\DA(S,\Lambda)$ induces a symmetric monoidal structure on $\DMT(S,\Lambda)$ and $\DAT(S,\Lambda)$ respectively. We denote by $\DAT(S,\Lambda)_{gm}$ (resp. $\DMT(S,\Lambda)_{gm}$) the smallest thick subcategory of $\DAT(S,\Lambda)$ (resp. $\DMT(S,\Lambda)$)  that contains all the Tate twists $\Lambda(i), i\in \mathbb{Z}$. The objects of this category are called geometric mixed Tate motives.

\begin{prop}
Let $X$ be a smooth scheme over $S$ that is such that $M(X)$ is a geometric mixed Tate motive; then for each $n$, the motive $M(\PConf_n(X))$ is also a geometric mixed Tate motive. Similarly, if $M_{et}(X)$ is a geometric mixed Tate \'etale motive, then so is $M_{et}(\PConf_n(X))$.
\end{prop}

\begin{proof}
The proof in $\DMT$ and $\DAT$ are similar. Let $P(n)$ be the set of subsets of $\{1,\ldots,n\}$ of cardinality $2$. For $P$ a subset of $P(n)$, we denote by $\PConf_{n,P}(X)$ the complement in $X^n$ of the diagonals indexed by $P$. We have $\PConf_{n,\varnothing}(X)=X^n$ and $\PConf_{n,P(n)}(X)=\PConf_n(X)$. We shall prove more generally that $M(\PConf_{n,P}(X))$ is mixed Tate for all $P$. We do this by induction on the pair $(n,|P|)$ with the lexicographic ordering. The result is obvious for $|P|=0$. Now, assume that $(i,j)$ is an element of $P$ and let $Q=P-\{(i,j)\}$. Then we can decompose $\PConf_{n,Q}(X)$ as the union of the open subscheme $\PConf_{n,P}(X)$ and its closed complement which is isomorphic to $\PConf_{n-1,P'}(X)$ for a certain $P'\in P(n-1)$. The result thus follows from the induction hypothesis and the purity theorem.
\end{proof}

\subsection{Betti realization and connectivity}

Finally, we will need a theorem which relates the connectivity of a mixed Tate motive to the connectivity of its Betti realization. 

\begin{theo}\label{theo: Betti detects connectivity}
Let $f \colon M\to N$ be a map in $\DAT(S,\Z)_{gm}$. Assume that $B^*(f)$ is an isomorphism in negative homological degrees. Assume further that one of the following conditions is satisfied
\begin{enumerate}
\item The ring $\Lambda$ is a $\Z/n$-algebra with $n$ any integer.
\item The ring $\Lambda$ is a subring of $\Q$.
\item The number $2$ is invertible in $\Lambda$.
\item The ring $\Lambda$ is arbitrary and the base number field $K$ cannot be embedded in $\R$.
\end{enumerate} 
Then, the map
\[\Hom_{\DA(S,\Lambda)}(N,\Lambda(q)[p])\to \Hom_{\DA(S,\Lambda)}(M,\Lambda(q)[p])\]
induced by $f$ is an isomorphism for all $q$ and for all $p<0$.
\end{theo}

\begin{proof}
We treat the case when $\Lambda$ is a $\Z/n$-algebra. Then Suslin's rigidity theorem (see \cite[Theorem 4.1]{ayoubrealisation}) gives us an equivalence of categories
\[\DA(S,\Lambda)\simeq \D(S_{et},\Lambda).\]
Moreover, viewed through the equivalence, the Betti realization functor can be identified with
\[i^*:\D(S_{et},\Lambda)\to\D(T_{et},\Lambda)\simeq\D(\Lambda)\]
where $T_{et}=\mathrm{Spec}(\overline{K})$ is the small étale site of the algebraic closure of $K$ and $i^*$ is the map induced by the inclusion $i \colon K\to\overline{K}$ (see \cite[Lemme 1.23 and Proposition 1.26]{ayoubalgebreII}). Moreover, for $U\in\D(S_{et},\Lambda)$, we have an étale descent spectral sequence of the form
\[H^s(\Gamma,\Hom_{\D(T_{et},\Lambda)}(i^*U,\Lambda(q)[t]))\implies \Hom_{\D(S_{et},\Lambda)}(U,\Lambda(q)[s+t])\]
where $H^*(\Gamma,-)$ denotes Galois cohomology with respect to $\Gamma=\mathrm{Gal}(\overline{K}/K)$ (see for instance \cite[VIII, Corollaire 2.3]{sga4}). Coming back to our situation, the map $f \colon M\to N$ will induce an isomorphism on the $E^2$-page of the étale descent spectral sequence in the range $t<0$. Since, moreover, this spectral sequence is zero for negative $s$, we see that the map $f \colon M\to N$ will induce an isomorphism on the $E^\infty$-page for $s+t<0$ which implies that we indeed have an isomorphism
\[\Hom_{\DA(S,\Z)}(N,\Z/n(q)[p])\to \Hom_{\DA(S,\Z)}(M,\Z/n(q)[p])\]
for $p<0$.

The case $\Lambda=\Q$ is classical and follows from the existence of the motivic $t$-structure on $\DAT(S,\Q)_{gm}$ \cite{levinetate} and the argument of \cite[Corollary 4.6]{horelmotivic} (beware that, contrary to what is stated in \cite{horelmotivic}, this corollary is incorrect with integral coefficients).

Now we treat the case of $\Lambda$ a subring of $\Q$. Using the extension of scalars adjunction
\[\DA(S,\mathbb{Z})\leftrightarrows\DA(S,\Lambda)\]
we see that it suffices to prove that the map induced by $f$
\[\Hom_{\DA(S,\Z)}(N,\Lambda(q)[p])\to \Hom_{\DA(S,\Z)}(M,\Lambda(q)[p])\]
is an isomorphism for all $q$ and for all $p<0$. 

Since $\Lambda$ is a subring of $\Q$ it is of the form $\Z[U^{-1}]$ for $U$ a set of primes. Let us denote by $V$ the complement of $U$ so that we have $\Q=\Lambda[V^{-1}]$. Let $I$ be the poset of integers whose prime factors are in $V$. Then we have a short exact sequence of $I$-diagrams of abelian groups
\[0\to\{m\mapsto \Lambda\}\to\{m\mapsto m^{-1}\Lambda\}\to \{m\mapsto \Lambda/m\}\to 0\]
where we denote by $m^{-1}\Lambda$ the $\Lambda$-submodule of $\Q$ generated by $m^{-1}$ and we have used the isomorphism $\Lambda/m\cong m^{-1}\Lambda/\Lambda$. Using this short exact sequence together with exactness of $I$-indexed colimits, we deduce a long exact sequence of abelian groups for any geometric motive $P$
\[\ldots\to\on{colim}_I\Hom(P,\Lambda/m(q)[p-1])\to \Hom(P,\Lambda(q)[p])\to \on{colim}_I\Hom(P,m^{-1}\Lambda(q)[p])\to\ldots\]
where all Homs are in $\DA(S,\Z)$. Now, we have a sequence of isomorphisms
\begin{align*}
\on{colim}_I\Hom(P,m^{-1}\Lambda(q)[p])&\cong \on{colim}_I(\Hom(P,m^{-1}\mathbb{Z}(q)[p]))\otimes\Lambda\\
&\cong (\Hom(P,\mathbb{Z}(q)[p]))\otimes \mathbb{Z}[V^{-1}]\otimes\Lambda\\
&\cong (\Hom(P,\mathbb{Z}(q)[p]))\otimes \Q\\
&\cong\Hom(P,\Q(q)[p])
\end{align*}
These isomorphisms follow from \cite[Corollary 5.4.9]{cisinskietale} (note that the authors work in the category $\DM_h(S,\Lambda)$ which is equivalent to $\DA(S,\Lambda)$ by \cite[Corollary 5.5.7]{cisinskietale}, note also that they denote by $\D_{\A^1,et}(S,\Lambda)$ our $\DA(S,\Lambda)$).
So our long exact sequence reduces to 
\[\ldots\to\on{colim}_I\Hom(P,\Lambda/m(q)[p-1])\to \Hom(P,\Lambda(q)[p])\to \Hom(P,\Q(q)[p])\to\ldots\]
The desired result will follow from this long exact sequence using the five lemma and the cases $\Lambda=\Q$ and $\Lambda=\Z/n$ that have already been treated.

Now, we treat the case of $\Lambda$ a ring in which $2$ is invertible. The short exact sequence
\[0\to \Lambda\to\Lambda\otimes\Q\to(\Lambda\otimes\Q)/\Lambda\to 0\]
induces a long exact sequence
\[\ldots\to\Hom(P,(\Lambda\otimes\Q)/\Lambda(q)[p-1])\to \Hom(P,\Lambda(q)[p])\to \Hom(P,\Lambda\otimes\Q(q)[p])\to\ldots\]
where all Homs are in $\DA(S,\Z[1/2])$. Now, we can use the fact that geometric motives are compact in $\DA(S,\Z[1/2])$ (this is proved in \cite[Proposition 8.3 and Remarque 3.13]{ayoubrealisation}). It follows that $\Hom(P,\Lambda\otimes\Q(q)[p])$ is a direct sum of copies of $\Hom(P,\Q(q)[p])$ and similarly that $\Hom(P,(\Lambda\otimes\Q)/\Lambda(q)[p-1]) $ is a filtered colimit of groups of the form $\Hom(P,A(q)[p-1])$
with $A$ a finite abelian group. So the result follows from cases that have already been treated.

Finally if $K$ does not embed in $\R$, then geometric motives are compact in $\DA(S,\Z)$ by \cite[Proposition 8.3 and Remarque 3.13]{ayoubrealisation}) and the proof in the paragraph above applies mutatis mutandis.
\end{proof}

\section{Construction of the stabilization map}
\label{stabilization}

In this section $X$ is a non-empty smooth scheme over $S$.

\begin{assu}\label{assumption}
We assume that there exists a pair $(Y,D)$ consisting of $Y$ a smooth geometrically connected scheme over $S$, $D$ a non-empty closed smooth subscheme such that $X\cong Y-D$. Finally, we assume that $D$ has a $K$-point. 
\end{assu}

The first part of the assumption is not very restrictive. Such a pair can be found as soon as $X$ is not complete. Indeed, in that case, by a theorem of Nagata, $X$ can be written as the complement of a non-empty closed subscheme in a complete scheme $\overline{X}$. Then, using Hironaka's resolution of singularities, we can assume that $\overline{X}$ is smooth and that $\Delta=\overline{X}-X$ is a normal crossing divisor. If we write $\Delta=\cup_{i=0}^nD_i$ the decomposition of $\Delta$ into irreducible components, we can then take $Y=\overline{X}-\cup_{i=1}^nD_i$ and $D=D_0\cap Y$.

Under Assumption \ref{assumption}, we construct (see Construction \ref{construction-stabilisation} below) a stabilization map
\begin{equation}\label{eq:stabilisation}
M(\PConf_n(X))\to M(\PConf_{n+1}(X))
\end{equation}
that is $\Sigma_n$-equivariant (see Lemma \ref{lemma-equivariance}) in the category $\DM(S,\Lambda)$. We can thus take homotopy orbits with respect to the symmetric group and get a map
\[M(\widetilde{\Conf}_n(X))\to M([\PConf_{n+1}(X)/\Sigma_n])\]
Finally we can compose this with the obvious map
\[M([\PConf_{n+1}(X)/\Sigma_n])\to M([\PConf_{n+1}(X)/\Sigma_{n+1}])\cong M(\widetilde{\Conf}_{n+1}(X))\]
and we get the stabilization map
\begin{equation}\label{eq:stabilisation-unordered}
M(\widetilde{\Conf}_n(X))\to M(\widetilde{\Conf}_{n+1}(X)).
\end{equation}
This is the map that will induce the isomorphisms in Theorems \ref{theorem-open-schemes} and \ref{main-theorem}. The maps \eqref{eq:stabilisation} and \eqref{eq:stabilisation-unordered} have the correct Betti realizations by Lemma \ref{lemma-Betti-realisations}. Observe that in the étale setting, and under the assumptions of Proposition \ref{prop : quotient vs stacky quotient}, the stabilization map above can equally  be seen as a map of étale motives:
\[M_{et}(\Conf_n(X))\to M_{et}(\Conf_{n+1}(X))\]

\begin{prop}
\label{prop:tubular}
Let $X$ be a smooth scheme and $D$ be a smooth closed subscheme of codimension $c$, let $N(j)$ be the normal bundle of the inclusion $j:D\to X$ and let $N_0(j)$ be the complement of the zero section of $N(j)$. Then there exists a map called the motivic exponential map
\[\mathrm{exp}^0:M(N_0(j))\to M(X-D)\]
whose Betti realization is homotopic to the map induced by the tubular neighborhood inclusion
\[C_*(N_0(j)_{an})\subset C_*((X-D)_{an}).\]
\end{prop}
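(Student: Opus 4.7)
The plan is to construct $\exp^0$ as the leftmost morphism in a map of two purity cofibre sequences sharing a common third term. Applying motivic purity to the codimension-$c$ closed immersions $j\colon D \hookrightarrow X$ and $s_0\colon D \hookrightarrow N(j)$ (the zero section), I would produce in $\DM(S,\Lambda)$ the cofibre sequences
\[
M(X-D) \longrightarrow M(X) \xrightarrow{\gamma_X} M(D)(c)[2c],\qquad
M(N_0(j)) \longrightarrow M(N(j)) \xrightarrow{\gamma_N} M(D)(c)[2c].
\]
Combining the $\A^1$-invariance isomorphism $\pi\colon M(N(j)) \xrightarrow{\sim} M(D)$ with the closed pushforward $j_*\colon M(D) \to M(X)$ yields a map $g := j_* \circ \pi \colon M(N(j)) \to M(X)$.

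The central step is to exhibit a coherently commuting square in the stable $\infty$-categorical enhancement of $\DM(S,\Lambda)$:
\[
\begin{CD}
M(N(j)) @>{\gamma_N}>> M(D)(c)[2c] \\
@V{g}VV                @|           \\
M(X)    @>{\gamma_X}>> M(D)(c)[2c].
\end{CD}
\]
After identifying $M(N(j))$ with $M(D)$, both composites equal cup-product with the Euler class $c_c(N(j))$: directly for $\gamma_N$ by construction of the Gysin map of the zero section, and by the motivic self-intersection formula for $\gamma_X \circ j_*$. The necessary homotopy is supplied by the deformation to the normal cone $D_j$ of $j$: the single Gysin triangle attached to $D\times \A^1 \hookrightarrow D_j$ restricts over the fibres $0,1 \in \A^1$ to the two displayed triangles, yielding the coherent comparison. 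The square then induces a morphism of cofibre sequences, and I would define $\exp^0 \colon M(N_0(j)) \to M(X-D)$ as the (up to contractible choice, canonical) leftmost morphism.

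For the Betti realisation I would use that $B^*$ is exact, symmetric monoidal, and sends motivic purity triangles to the classical Gysin triangles on singular chains of the underlying complex manifolds. Applying $B^*$ to the whole construction produces the analogous square of chain complexes, whose induced leftmost morphism is homotopic to the chain map $C_*(N_0(j)_{an}) \to C_*((X-D)_{an})$ coming from the tubular neighbourhood inclusion, which is the asserted identification.

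The main obstacle is the coherent commutativity of the central square, equivalently a coherent form of the motivic self-intersection formula. At the level of the triangulated category the relevant equality in $\Hom$-groups is classical, but upgrading it to an actual homotopy in the underlying $\infty$-category requires essential use of the deformation $D_j$ together with the six-functor formalism for motives.
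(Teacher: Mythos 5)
Your proposal follows essentially the same strategy as the paper: realise $\exp^0$ as the leftmost arrow of a coherent map of Gysin triangles (equivalently, as part of a homotopy cocartesian square comparing the purity triangle of $D\subset X$ with that of the zero section $D\subset N(j)$), and then identify the Betti realization against the classical excision/tubular-neighbourhood square. The difference is that the paper does not rebuild this: it cites Levine's construction of $\exp^0$ from \cite[Section 5.2]{levinemotivic} directly, where Levine already supplies the coherently commuting homotopy cocartesian square, and then merely observes that since Levine works in $\cat{SH}(S)$ one passes to $\DM(S,\Lambda)$ by tensoring with the motivic Eilenberg--MacLane spectrum. Your sketch (self-intersection formula for $\gamma_X \circ g$, deformation to the normal cone supplying the coherent homotopy) is indeed the content of Levine's and Morel--Voevodsky's purity argument, so you are re-deriving the citation rather than taking a different route; you correctly flag that the coherent upgrade is the substantive step, and this is exactly what is outsourced to \cite{levinemotivic} in the paper. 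Two small points where the paper is tighter: first, the $\cat{SH}(S)$ versus $\DM(S,\Lambda)$ issue above; second, for the Betti realization you argue via a map of cofibre sequences, but a map of distinguished triangles with identity on the cofibre does not by itself determine the third arrow up to homotopy --- the paper instead uses that $B^*$ is exact, hence sends the homotopy cartesian square to a homotopy cartesian square in $\D(\Lambda)$, and then compares against the topological cartesian square coming from excision and the tubular neighbourhood theorem, concluding by uniqueness of homotopy pullbacks. Since you built your square coherently you do have the cartesian property, but it should be invoked explicitly rather than the weaker triangle-level statement.
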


\begin{proof}
In \cite[Section 5.2]{levinemotivic} Levine shows that one can construct an exponential map 
\[\mathrm{exp}^0:M(N_0(j))\to M(X-D)\]
such that the following square in $\DM(S,\Lambda)$ commutes and is homotopy cocartesian:
\begin{equation}
\label{eq:map-of-triangles-1}
\centering
\begin{split}
\begin{tikzpicture}
[x=1mm,y=1mm]
\node (tl) at (0,10) {$M(N_0(j))$};
\node (tm) at (30,10) {$M(N(j))$};
\node (bl) at (0,0) {$M(X-D)$};
\node (bm) at (30,0) {$M(X)$};
\draw[->](tl) to (tm);
\draw[->] (bl) to (bm);
\draw[->] (tl) to (bl);
\draw[->] (tm) to (bm);
\end{tikzpicture}
\end{split}
\end{equation}
where the right-hand vertical map is induced by the composition $N(j)\to D\xrightarrow{j} X$. Note that Levine constructs $\mathrm{exp}^0$ in the stable motivic homotopy category $\cat{SH}(S)$ instead of $\DM(S,\Lambda)$ but it is clear that his construction can be adapted to work in $\DM(S,\Lambda)$. Alternatively  $\DM(S,\Lambda)$ is simply the category of modules over the motivic Eilenberg-MacLane spectrum in $\cat{SH}(S)$ (this is the main theorem of \cite{rondigsmodules}) so we can simply tensor Levine's map with the motivic Eilenberg-MacLane spectrum to get the desired map.

The map $\mathrm{exp}^0$ has the correct Betti realization. Indeed, since the Betti realization functor can be modelled by a left adjoint $\infty$-functor between stable $\infty$-categories, it preserves homotopy cartesian squares. Therefore, applying Betti realization to the square \ref{eq:map-of-triangles-1}, we get a homotopy cartesian square in $\D(\Lambda)$:
\begin{equation}
\label{eq:map-of-triangles-2}
\centering
\begin{split}
\begin{tikzpicture}
[x=1mm,y=1mm]
\node (tl) at (0,10) {$C_*(N_0(j)_{an})$};
\node (tm) at (30,10) {$C_*(N(j)_{an})$};
\node (bl) at (0,0) {$C_*((X-D)_{an})$};
\node (bm) at (30,0) {$C_*(X_{an})$};
\draw[->] (tl) to (tm);
\draw[->] (bl) to (bm);
\draw[->] (tl) to (bl);
\draw[->] (tm) to (bm);
\end{tikzpicture}
\end{split}
\end{equation}
On the other hand, we know from the excision theorem in classical algebraic topology and the tubular neighborhood theorem that there is a homotopy cocartesian (and hence also cartesian) square in $\D(\Lambda)$ which is the same as the one above but where the left-hand vertical map is induced by the tubular neighborhood inclusion. Since homotopy pullbacks are unique up to weak equivalences, the Betti realization of $\mathrm{exp}^0$ must be homotopic to the map induced by the tubular neighborhood inclusion.
\end{proof}

\begin{prop}
\label{prop-equivariance}
In the setting of Proposition \ref{prop:tubular}, suppose that $X$ is equipped with an action of a discrete group $G$ that sends $D$ to itself. Equip the normal bundle $N(j)$ with the natural induced $G$-action, which restricts to a $G$-action on the subscheme $N_0(j)$. Then the motivic exponential map of Proposition \ref{prop:tubular} is $G$-equivariant.
\end{prop}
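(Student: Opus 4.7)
The plan is to derive $G$-equivariance of $\mathrm{exp}^0$ from the naturality of Levine's construction with respect to morphisms of smooth pairs. Since $G$ is discrete, what needs checking is that for each $g\in G$ the square
\[
\begin{CD}
M(N_0(j)) @>{\mathrm{exp}^0}>> M(X-D) \\
@VV{M(g)}V @VV{M(g)}V \\
M(N_0(j)) @>{\mathrm{exp}^0}>> M(X-D)
\end{CD}
\]
commutes in $\DM(S,\Lambda)$, where the vertical maps are induced by the action of $g$ on $N_0(j)$ and on $X-D$. These actions are well-defined because $g$ preserves $D$, and so lifts canonically to an automorphism of $N(j)$ fixing the zero section setwise.

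First I would unwind the construction in \cite[Section 5.2]{levinemotivic} and check that every ingredient is functorial with respect to morphisms of smooth pairs $(X_1,D_1)\to(X_2,D_2)$. The two Gysin triangles appearing in \eqref{eq:map-of-triangles-1} are natural in such morphisms by their very construction, and the deformation to the normal cone that Levine uses to compare them is a purely scheme-theoretic recipe with evident functoriality. Since $\mathrm{exp}^0$ is obtained from these natural data by a formal process --- namely, as the map on upper-left corners induced by a homotopy cocartesian square --- any morphism of smooth pairs $(X,D)\to (X',D')$ induces a commuting square with $\mathrm{exp}^0$ on top and bottom. Specialising to the automorphisms $g\colon (X,D)\to (X,D)$ for $g\in G$ then gives the proposition.

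The hard part will be passing from Levine's construction, performed in $\cat{SH}(S)$, to a $G$-equivariant statement in $\DM(S,\Lambda)$. I expect this to be handled exactly as in the last paragraph of the proof of Proposition \ref{prop:tubular}: tensoring with the motivic Eilenberg--MacLane spectrum commutes with pullback along $g$, so Levine's naturality transfers to $\DM(S,\Lambda)$ without further choices. A secondary concern is that equivariance in a triangulated category is only meaningful pointwise for discrete $G$; for the later applications to homotopy orbits under $\Sigma_n$ one would want a model-categorical refinement, but this is already implicit in Levine's construction being implemented at the model-category level.
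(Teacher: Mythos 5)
Your proposal is correct and takes essentially the same approach as the paper, which disposes of the proposition in a single sentence by appealing to the naturality of Levine's construction of $\mathrm{exp}^0$ in the pair $(X,D)$. Your elaboration --- reducing to per-element commuting squares, invoking functoriality of the Gysin triangles and the deformation to the normal cone, and handling the passage from $\cat{SH}(S)$ to $\DM(S,\Lambda)$ by tensoring with the motivic Eilenberg--MacLane spectrum --- simply spells out what the paper leaves implicit, and your closing remark about lifting equivariance to the model-category level is the same abuse the paper explicitly licenses itself to make elsewhere.
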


\begin{proof}
An examination of the construction of Levine (see \cite[Section 5.2]{levinemotivic}) shows that the map $\mathrm{exp}^0$ is natural in the data $(X,D)$.
\end{proof}

\begin{defi}
Denote by $\PConf_{n+1,\leqslant 1}(Y,D) \subset \PConf_{n+1}(Y)$ the smooth subscheme of ordered $(n+1)$-point configurations in $Y$ where the first $n$ points lie in $X$, and denote by $\PConf_{n+1,1}(Y,D)$ its closed smooth subscheme given by those configurations where, in addition, the last point lies in $D$. Note that $\PConf_{n+1,1}(Y,D)$ is isomorphic to $\PConf_n(X) \times D$ and its complement $\PConf_{n+1,\leqslant 1}(Y,D) - \PConf_{n+1,1}(Y,D)$ is isomorphic to $\PConf_{n+1}(X)$.
\end{defi}

\begin{cons}
\label{construction-stabilisation}
The stabilization map \eqref{eq:stabilisation} is constructed as follows. Denote the inclusion $D \to Y$ by $i$ and the inclusion $\PConf_{n+1,1}(Y,D) \to \PConf_{n+1,\leqslant 1}(Y,D)$ by $j$. Choose a $K$-point $* \in N_0(i)$. Such a point exists by our assumption that $D$ has a $K$-point and the fact that $D$ has positive codimension in $Y$ (indeed, $Y_{an}$ is connected and $Y_{an}-D_{an}$ non-empty, so $D_{an}$ must have positive codimension in $Y_{an}$ and dimension is preserved by analytification by \cite[Expos\'e XII, Proposition 2.1]{SGA1}). By construction, we have an identification $N_0(j) = \PConf_n(X) \times N_0(i)$, so the choice of $*$ induces a map
\begin{equation}
\label{eq:stabilisation-step1}
M(\PConf_n(X)) \to M(N_0(j)).
\end{equation}
By Proposition \ref{prop:tubular} and the identification of $\PConf_{n+1,\leqslant 1}(Y,D) - \PConf_{n+1,1}(Y,D)$ with $\PConf_{n+1}(X)$, there is a map
\begin{equation}
\label{eq:stabilisation-step2}
M(N_0(j)) \to M(\PConf_{n+1}(X)),
\end{equation}
and \eqref{eq:stabilisation} is defined to be the composition of these two maps.
\end{cons}

\begin{lemm}
\label{lemma-equivariance}
The stabilization map \eqref{eq:stabilisation} is $\Sigma_n$-equivariant.
\end{lemm}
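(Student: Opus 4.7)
The plan is to decompose the stabilization map \eqref{eq:stabilisation} into its two constituent maps \eqref{eq:stabilisation-step1} and \eqref{eq:stabilisation-step2} from Construction \ref{construction-stabilisation}, and verify $\Sigma_n$-equivariance of each separately. Throughout, $\Sigma_n$ should act by permuting the first $n$ coordinates: on $\PConf_n(X)$ in the standard way, on $\PConf_{n+1,\leqslant 1}(Y,D)$ and on its closed subscheme $\PConf_{n+1,1}(Y,D)$ by permuting the first $n$ points while fixing the $(n+1)$-st, and on $\PConf_{n+1}(X)$ as the subgroup of $\Sigma_{n+1}$ fixing the last element. The first bookkeeping task is to identify this action on the normal bundle: under the isomorphism $\PConf_{n+1,1}(Y,D) \cong \PConf_n(X) \times D$ the induced action is the product of the standard action on $\PConf_n(X)$ with the trivial action on $D$, so via the resulting identification $N_0(j) \cong \PConf_n(X) \times N_0(i)$ the induced $\Sigma_n$-action is standard on the first factor and trivial on the second.

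Once this is set up, equivariance of \eqref{eq:stabilisation-step1} is immediate: it is obtained by applying $M$ to the morphism $\id_{\PConf_n(X)} \times (\ast \hookrightarrow N_0(i))$, and $\Sigma_n$ acts trivially on the second factor, so this scheme morphism is visibly $\Sigma_n$-equivariant. Equivariance of \eqref{eq:stabilisation-step2} is then handled by Proposition \ref{prop-equivariance}: since the $\Sigma_n$-action on $\PConf_{n+1,\leqslant 1}(Y,D)$ preserves the closed subscheme $\PConf_{n+1,1}(Y,D)$ (because the constraint that the last point lies in $D$ involves only the coordinate fixed by $\Sigma_n$), the motivic exponential map $\mathrm{exp}^0 \colon M(N_0(j)) \to M(\PConf_{n+1}(X))$ is $\Sigma_n$-equivariant. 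Composing the two equivariant maps then gives the desired equivariance of \eqref{eq:stabilisation}.

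I do not anticipate any real obstacle, as Proposition \ref{prop-equivariance} does the substantive work. The only point that must be handled carefully is the identification of the $\Sigma_n$-action on the various auxiliary schemes; once one checks that, under $N_0(j) \cong \PConf_n(X) \times N_0(i)$, the action is the product of the standard action with the trivial action, the lemma follows by a direct unwinding of Construction \ref{construction-stabilisation}.
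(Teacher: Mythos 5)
Your proof matches the paper's argument essentially verbatim: both identify the $\Sigma_n$-action on $\PConf_{n+1,\leqslant 1}(Y,D)$ and $N_0(j)$ as the permutation action on the $\PConf_n(X)$ factor times the trivial action on $N_0(i)$, deduce equivariance of \eqref{eq:stabilisation-step1} from the triviality of the action on the second factor, and invoke Proposition \ref{prop-equivariance} for \eqref{eq:stabilisation-step2}. No discrepancies to flag.
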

\begin{proof}
There is an action of $\Sigma_n$ on $\PConf_{n+1,\leqslant 1}(Y,D)$ given by permuting the first $n$ points, and this action preserves its subscheme $\PConf_{n+1,1}(Y,D)$. This induces an action on $N_0(j)$, which corresponds, under the identification $N_0(j) = \PConf_n(X) \times N_0(i)$, to the permutation action on $\PConf_n(X)$ and the trivial action on $N_0(i)$. Hence the map \eqref{eq:stabilisation-step1} is $\Sigma_n$-equivariant. By Proposition \ref{prop-equivariance}, the map \eqref{eq:stabilisation-step2} is also $\Sigma_n$-equivariant.
\end{proof}

We refer the reader to \cite[\S 4]{Randal-Williams2013} for a construction of the stabilization map in a topological setting; see also \cite[p.~101]{mcduffconfiguration} for a more classical reference. We call this map the \emph{classical stabilization map} in the next lemma.

\begin{lemm}
\label{lemma-Betti-realisations}
The Betti realizations of \eqref{eq:stabilisation} and of \eqref{eq:stabilisation-unordered} are the maps of chain complexes induced by the classical stabilization maps for ordered and unordered configuration spaces respectively.
\end{lemm}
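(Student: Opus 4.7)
The plan is to apply the Betti realization functor $B^*$ step by step to Construction \ref{construction-stabilisation} and identify the result, up to homotopy, with the classical stabilization map. Throughout I use Fact \ref{fact-Betti-realisation}, which identifies $B^* \circ M$ with $X \mapsto C_*(X_{an};\Lambda)$, together with the fact that $B^*$ is symmetric monoidal (so it commutes with finite products of schemes) and a left adjoint (so it commutes with the homotopy colimits used to define motives of stacky quotients).

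For the map \eqref{eq:stabilisation-step1}, the scheme-level identification $N_0(j) = \PConf_n(X) \times N_0(i)$ analytifies to an identification of complex manifolds $N_0(j)_{an} = \PConf_n(X)_{an} \times N_0(i)_{an}$, and the map is induced by the $K$-point $* \in N_0(i)$. Hence its Betti realization is the map on singular chains induced by the continuous inclusion $\PConf_n(X)_{an} \to N_0(j)_{an}$, $x \mapsto (x, *_{an})$. For the map \eqref{eq:stabilisation-step2}, Proposition \ref{prop:tubular} says exactly that its Betti realization is homotopic to the map induced by the tubular neighborhood inclusion
\[
N_0(j)_{an} \hookrightarrow \bigl(\PConf_{n+1,\leqslant 1}(Y,D) - \PConf_{n+1,1}(Y,D)\bigr)_{an} = \PConf_{n+1}(X)_{an}.
\]
Composing, the Betti realization of \eqref{eq:stabilisation} is homotopic to the map on singular chains induced by the continuous map $\PConf_n(X)_{an} \to \PConf_{n+1}(X)_{an}$ sending $(x_1,\ldots,x_n)$ to $(x_1,\ldots,x_n,p_*)$, where $p_*$ is the image of $*_{an}$ under a fixed tubular neighborhood embedding of $N_0(i)_{an}$ into a punctured collar of $D_{an}$ in $Y_{an}$. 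Since any two such choices of collar and of auxiliary point near $D_{an}$ give homotopic maps, this is the classical stabilization map for ordered configuration spaces of the open manifold $X_{an} = Y_{an} - D_{an}$.

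For \eqref{eq:stabilisation-unordered}, one takes homotopy $\Sigma_n$-orbits of the previous map and then composes with the map induced by the inclusion $\Sigma_n \hookrightarrow \Sigma_{n+1}$. Applying $B^*$ commutes with both operations, so the Betti realization is modelled by the induced map on Borel constructions
\[
E\Sigma_n \times_{\Sigma_n} \PConf_n(X)_{an} \longrightarrow E\Sigma_{n+1} \times_{\Sigma_{n+1}} \PConf_{n+1}(X)_{an},
\]
which is a standard model for the classical unordered stabilization map $C_n(X_{an}) \to C_{n+1}(X_{an})$. The only nontrivial input is the appeal to Proposition \ref{prop:tubular} in the second step; everything else is bookkeeping about compatibility of $B^*$ with products and homotopy colimits, so no serious obstacle arises.
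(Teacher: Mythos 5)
Your proof is correct and follows essentially the same route as the paper's: decompose Construction \ref{construction-stabilisation} into its two steps, identify the Betti realization of the first step via Fact \ref{fact-Betti-realisation} and of the second via Proposition \ref{prop:tubular}, and then pass to the unordered case by observing that $B^*$ and $C_*(-;\Lambda)$ both preserve homotopy colimits. Your added remark that the resulting map is well-defined up to homotopy independently of the choice of collar and auxiliary point is a small but reasonable amplification of what the paper leaves implicit.
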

\begin{proof}
The classical stabilization map for ordered configuration spaces is homotopic to the composition of
\begin{equation}
\label{eq:stabilisation-classical-step1}
x \mapsto (x,*) \colon \PConf_n(X)_{an} \to \PConf_n(X)_{an} \times N_0(i)_{an} \cong N_0(j)_{an}
\end{equation}
with the tubular neighborhood inclusion
\begin{equation}
\label{eq:stabilisation-classical-step2}
N_0(j)_{an} \to \PConf_{n+1}(X)_{an}.
\end{equation}
(In more detail, the classical stabilisation map is defined via an auxiliary manifold with non-empty boundary whose interior is the manifold $X_{an}$. The composition of \eqref{eq:stabilisation-classical-step1} and \eqref{eq:stabilisation-classical-step2} is homotopic to this construction, where the auxiliary manifold is the (real) blow-up of $Y_{an}$ along $D_{an}$. Here we are using the fact that $D_{an}$ is non-empty, since $D$ has a $K$-point.)

We therefore need to show that $C_*(\eqref{eq:stabilisation-classical-step1};\Lambda) \cong B^*(\eqref{eq:stabilisation-step1})$ and $C_*(\eqref{eq:stabilisation-classical-step2};\Lambda) \cong B^*(\eqref{eq:stabilisation-step2})$. The first of these identifications follows directly from Fact \ref{fact-Betti-realisation} and the second follows from the last part of Proposition \ref{prop:tubular}. This shows that the Betti realization of \eqref{eq:stabilisation} is the map induced by the classical stabilization map for ordered configuration spaces.

The classical stabilization map $s^{un} \colon \Conf_n(X)_{an} \to \Conf_{n+1}(X)_{an}$ for unordered configuration spaces is obtained from the classical stabilization map $s^{ord} \colon \PConf_n(X)_{an} \to \PConf_{n+1}(X)_{an}$ for ordered configuration spaces by taking quotients with respect to the natural symmetric group actions. Since these actions are free and properly discontinuous, we may equivalently take homotopy orbits instead of quotients. At the beginning of this section, we defined \eqref{eq:stabilisation-unordered} by taking homotopy orbits (in the category of motives) of symmetric group actions on the map \eqref{eq:stabilisation}. The fact that $C_*(s^{un};\Lambda) \cong B^*(\eqref{eq:stabilisation-unordered})$ thus follows from the fact that $C_*(s^{ord};\Lambda) \cong B^*(\eqref{eq:stabilisation})$ -- proved in the paragraph above -- and the fact that both $B^*$ and $C_*(-;\Lambda)$ preserve homotopy colimits, since they are both left adjoints.
\end{proof}

\section{Homological stability for open schemes}
\label{etale}

In this section, we still work under Assumption \ref{assumption}. We further assume that we have chosen an $S$-point of the punctured normal bundle of the inclusion $D\to Y$. Hence, we have a stabilization map
\[M(\widetilde{\Conf}_n(X))\to M(\widetilde{\Conf}_{n+1}(X))\]
as explained in the previous section. If $X$ is a quasi-projective variety, then the assumptions of Proposition \ref{prop : quotient vs stacky quotient} are satisfied, and this stabilization map induces a map in {\'e}tale motivic cohomology
\[\H^{p,q}_{et}(\Conf_{n+1}(X),\Lambda)\cong \H^{p,q}_{et}(\widetilde{\Conf}_{n+1}(X),\Lambda)\to \H^{p,q}_{et}(\widetilde{\Conf}_n(X),\Lambda)\cong \H^{p,q}_{et}(\Conf_{n}(X),\Lambda).\]

\begin{theo}
\label{theorem-open-schemes}
Assume that $X$ satisfies the conditions above and assume further that $M_{et}(X)$ is a mixed Tate motive. Assume that one of the four conditions of Theorem \ref{theo: Betti detects connectivity} is satisfied. Then the stabilization map 
\[\H^{p,q}_{et}(\widetilde{\Conf}_{n+1}(X),\Lambda)\to \H^{p,q}_{et}(\widetilde{\Conf}_{n}(X),\Lambda)\]
is an isomorphism for $p\leq n/2$. If furthermore $X$ is a quasi-projective variety, then the stabilization map
\[\H^{p,q}_{et}(\Conf_{n+1}(X),\Lambda)\to \H^{p,q}_{et}(\Conf_{n}(X),\Lambda)\]
is an isomorphism for $p\leq n/2$.
\end{theo}

\begin{proof}
This follows from the fact that the Betti realization detects connectivity (Theorem \ref{theo: Betti detects connectivity}) and the statement in the topological case (Theorem \ref{t:classical}). In order to apply Theorem \ref{t:classical}, we must verify that the manifold $X_{an}$ is connected and open (i.e.~non-compact). By our assumptions, we have $X_{an} \cong Y_{an} - D_{an}$, where $Y_{an}$ is connected and $D_{an}$ is a non-empty (since $D$ has a $K$-point) closed submanifold; thus $X_{an}$ is non-compact. Moreover, since $D_{an}$ has positive complex codimension, thus real codimension at least $2$, cutting it out of $Y_{an}$ cannot disconnect, so $X_{an}$ is also connected.
\end{proof}

\section{Stability for $X=\A^d$}
\label{motivic}

In the case where $X=\A^d$ we can prove stability in the category $\DM(S,\Lambda)$ instead of $\DA(S,\Lambda)$. Note that a similar result was claimed in \cite{horelmotivic} but the proof is incorrect. It was based on the erroneous claim that the Betti realization functor on $\DMT(S,\Lambda)$ detects connectivity. Here we replace the Betti realization by the associated graded for the weight filtration which indeed detects connectivity and is still sufficiently close to the Betti realization in this case.

\subsection{The weight filtration}

For $M\in \DMT(S,\Lambda)$, we denote by
\[\ldots\to w_{\geq n}(M)\to w_{\geq n-1}(M)\to\ldots\to M\]
the weight filtration of $M$. If we denote by $w_{\geq n}\DMT(S,\Lambda)$ the  localizing subcategory of $\DMT(S,\Lambda)$ generated by the objects $\Lambda(i)$ with $i\geq n$, then $w_{\geq n}$ is by definition the right adjoint to the inclusion
\[w_{\geq n}\DMT(S,\Lambda)\to \DMT(S,\Lambda).\]
The fact that this right adjoint exists follows from the adjoint functor theorem. Observe that we have the following computation
\[
w_{\geq n}(\Lambda(i)[j]) = \begin{cases} \Lambda(i)[j] & \mathrm{if}\; i\geq n,\\
0 & \mathrm{else}.
\end{cases}
\]
This follows from the fact that, when $p>i$, we have
\[\Hom_{\DMT(S,\Lambda)}(\Lambda(p),\Lambda(i)[j])=0.\]

Note that Levine gives a construction of this right adjoint in \cite{levinetate} but he restricts to geometric mixed Tate motives (Levine's proof is stated for motives with rational coefficients but applies to motives with integral coefficients as observed by Kahn in \cite{kahnweight}). Nevertheless we can extend Levine's functor to the whole category $\DMT(S,\Lambda)$ by imposing that it commutes with filtered colimits. The fact that these two constructions agree follows from the following lemma.

\begin{lemm}
\label{lem:weight-filtration-commutes-with-colimits}
The functor
\[w_{\geq n}:\DMT(S,\Lambda)\to \DMT(S,\Lambda)\]
preserves arbitrary homotopy colimits.
\end{lemm}

\begin{proof}
Since this functor is exact, it suffices to prove that it preserves arbitrary direct sums. The category  $w_{\geq n}\DMT(S,\Lambda)$ is compactly generated and we can pick the set of motives $\Lambda(i)$ with $i\geq n$ as a set of compact generators of this category. Then a standard argument shows that it is enough to check that the inclusion
\[w_{\geq n}\DMT(S,\Lambda)\to \DMT(S,\Lambda)\] 
sends the set of compact generators of $w_{\geq n}\DMT(S,\Lambda)$ to compact objects which is obvious.
\end{proof}

We denote by $w_n(M)$ the homotopy cofiber of the map
\[w_{\geq n+1}(M)\to w_{\geq n}(M)\]

\begin{prop}\label{prop : pure weight}
Let $M$ be an object of $\DMT(S,\Lambda)$. Then the motive $w_n(M)$ is of the form $A(n)$ for some object $A$ of $\D(\Lambda)$.
\end{prop}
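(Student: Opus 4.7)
The strategy is to show that $w_n(X)$ lies in the localizing subcategory $\mathcal{C}_n \subset \DMT(S,\Lambda)$ generated by $\Lambda(n)$, and then to identify $\mathcal{C}_n$ with $\D(\Lambda)$ via $A \mapsto A \otimes \Lambda(n)$.

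The linchpin is the basic vanishing
\[
\Hom_{\DMT(S,\Lambda)}(\Lambda(i)[s],\, \Lambda(j)[t]) \;=\; \H^{t-s,\,j-i}(S;\Lambda) \;=\; 0 \quad \text{for } j<i,
\]
which is the standard vanishing of motivic cohomology of a field in negative Tate weights. Specialising to $i = j = n$ shows that the graded endomorphisms of $\Lambda(n)$ are concentrated in degree zero and equal $\Lambda$ there; a standard recognition principle for compactly generated stable $\infty$-categories with a compact generator of this form then yields an equivalence $\D(\Lambda) \xrightarrow{\sim} \mathcal{C}_n$ via $A \mapsto A \otimes \Lambda(n)$. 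In particular, any object of $\mathcal{C}_n$ has the form $A(n)$.

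It remains to show $w_n(X) \in \mathcal{C}_n$. First, $w_n(X) \in (w_{\geq n+1}\DMT(S,\Lambda))^{\perp}$: for any $W \in w_{\geq n+1}\DMT(S,\Lambda)$, the adjunctions defining $w_{\geq n+1}$ and $w_{\geq n}$ identify both $\Hom(W, w_{\geq n+1}(X)[*])$ and $\Hom(W, w_{\geq n}(X)[*])$ with $\Hom(W, X[*])$ in a manner compatible with the map $w_{\geq n+1}(X)\to w_{\geq n}(X)$, so the long exact sequence associated to the cofiber sequence $w_{\geq n+1}(X)\to w_{\geq n}(X)\to w_n(X)$ forces $\Hom(W, w_n(X)[*]) = 0$. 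Second, I would show that $w_{\geq n}\DMT(S,\Lambda) \cap (w_{\geq n+1}\DMT(S,\Lambda))^{\perp} = \mathcal{C}_n$: the containment $\supseteq$ is immediate from the key vanishing, and for $\subseteq$, given $Z$ in the intersection, set $G(Z) := \Hom_{\DMT(S,\Lambda)}(\Lambda(n), Z) \in \D(\Lambda)$ and consider the counit $G(Z)\otimes \Lambda(n) \to Z$ with cofiber $C \in w_{\geq n}\DMT(S,\Lambda)$. Testing $C$ against the compact generators $\{\Lambda(i)[s] : i \geq n,\, s \in \Z\}$ of $w_{\geq n}\DMT(S,\Lambda)$ gives $C = 0$: for $i = n$ the counit induces the identity on $\Hom(\Lambda(n)[s], -)$ by construction, while for $i > n$ both sides of the counit have vanishing $\Hom$ from $\Lambda(i)[s]$ (respectively by the key vanishing applied to $G(Z) \otimes \Lambda(n) \in \mathcal{C}_n$, and by the hypothesis $Z \in (w_{\geq n+1}\DMT(S,\Lambda))^{\perp}$). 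Hence $Z \cong G(Z)\otimes \Lambda(n) \in \mathcal{C}_n$, and setting $A := G(w_n(X))$ concludes the proof.

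The main technical point is the basic vanishing of motivic cohomology of $\mathrm{Spec}(K)$ in negative Tate weights; granted this, the rest is a standard semi-orthogonal decomposition argument identifying the ``pure weight $n$'' slice of $w_{\geq n}\DMT(S,\Lambda)$ with $\D(\Lambda)$.
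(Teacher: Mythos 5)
The paper states this proposition without giving a proof, treating it as a known structural fact about the weight filtration on $\DMT(S,\Lambda)$ over a field, so there is no argument in the paper to compare against. Your proof is correct and is essentially the canonical argument. The two inputs are exactly the right ones: the vanishing $\H^{p,q}(\operatorname{Spec}(K);\Lambda)=0$ for $q<0$, together with $\H^{p,0}(\operatorname{Spec}(K);\Lambda)=\Lambda$ for $p=0$ and $0$ otherwise, which simultaneously gives the required semi-orthogonality between the Tate twists and identifies the graded endomorphism ring of $\Lambda(n)$ with $\Lambda$ concentrated in degree zero; the Schwede--Shipley recognition theorem then identifies the localizing subcategory $\mathcal{C}_n$ generated by $\Lambda(n)$ with $\D(\Lambda)$ via $A\mapsto A\otimes\Lambda(n)$. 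Your argument that $w_n(X)$ lies in $w_{\geq n}\DMT\cap(w_{\geq n+1}\DMT)^{\perp}$ and that this intersection equals $\mathcal{C}_n$ is the semi-orthogonal decomposition underlying the weight truncations, and the verifications (compatibility of the two counits, testing the cofiber of the counit against the compact generators $\Lambda(i)[s]$, $i\geq n$) are all sound. The only hypotheses worth flagging explicitly, since the whole argument leans on them, are the compactness of the $\Lambda(i)$ in $\DMT(S,\Lambda)$ --- needed both so that $(w_{\geq n+1}\DMT)^{\perp}$ is closed under colimits, making the inclusion $\mathcal{C}_n\subset(w_{\geq n+1}\DMT)^{\perp}$ follow from the single generator, and for the Schwede--Shipley step --- and the existence of the $\infty$-categorical enhancement, which the paper's setup already supplies.
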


\begin{prop}
\label{p:connectivity}
Let $M\in\DMT(S,\Lambda)$. Assume that the weight filtration of $M$ is bounded, i.e., that $w_{\geq n}(M)=0$ for any large enough $n$ and $w_{\geq n}(M)\cong M$ for any small enough $n$. Assume further that $B^*\circ w_n(M)$ is connective for all $n$. Then for $m<0$, we have
\[\Hom_{\DM(S,\Lambda)}(M,\Lambda(q)[m])=0\]
\end{prop}

\begin{proof}
We shall prove that for each $n$ and for each negative $m$, we have
\[\Hom_{\DM(S,\Lambda)}(w_{\geq n}(M),\Lambda(q)[m])=0\]
Since $w_{\geq n}(M)$ is eventually isomorphic to $M$ by assumption, this will give the desired result. The fact that $w_{\geq n}(M)=0$ for $n$ large enough implies that we can prove this by descending induction on $n$. So we assume that
\[\Hom_{\DM(S,\Lambda)}(w_{\geq n+1}(M),\Lambda(q)[m])=0\]
and consider the exact sequence
\[\Hom(w_n(M),\Lambda(q)[m])\to \Hom(w_{\geq n}(M),\Lambda(q)[m])\to \Hom(w_{\geq n+1}(M),\Lambda(q)[m]).\]
From this exact sequence, we see that it is enough to prove that $\Hom(w_n(M),\Lambda(q)[m])=0$ to conclude the argument. For this we can use the previous proposition and our assumption that $B^*\circ w_n(M)=0$. These two facts together imply that $w_n(M)$ is of the form $A(n)$ for $A$ a connective object of $\D(\Lambda)$. Hence we are reduced to proving that for $A$ connective and $m<0$, we have
\[\Hom_{\DM(S,\Lambda)}(A(0),\Lambda(q-n)[m])=0.\]
This is true for $A=\Lambda$ by the Beilinson-Soul\'e vanishing conjecture (recall that the Beilinson-Soul\'e vanishing conjecture is a conjecture for general fields but a theorem for number fields thanks to Borel's computation of the algebraic $K$-theory of number fields with rational coefficients). This can be extended to a general $A$ using the observation that the category of connective chain complexes is the smallest subcategory of $\D(\Lambda)$ that is stable under homotopy colimits and contains $\Lambda$.
\end{proof}

\begin{rem}
The appeal to the Beilinson-Soul\'e vanishing conjecture is the only reason that we have to restrict to a number field in this section. The rest of the argument works for an arbitrary subfield of $\C$.
\end{rem}

\begin{coro}
\label{c:connectivity}
Let $M\to N$ be a morphism in $\DMT(S,\Lambda)$ where $M$ and $N$ have bounded weight filtration. Suppose that $B^*(w_n(M)) \to B^*(w_n(N))$ induces isomorphisms on homology in degrees $\leq d$ for all $n$. Then $M\to N$ induces isomorphisms on motivic cohomology in all bidegrees $(p,q)$ with $p\leq d-1$ and surjections when $p=d$.
\end{coro}
\begin{proof}
Denote the cofiber of $M\to N$ by $C$, so that $B^*(w_n(C))$ is the cofiber of $B^*(w_n(M)) \to B^*(w_n(N))$. The long exact sequence on homology and the assumption imply that $B^*(w_n(C))$ has trivial homology in degrees $\leq d$, in other words it is connective after shifting degrees down by $d+1$. Proposition \ref{p:connectivity} therefore implies that we have $\Hom_{\DM(S,\Lambda)}(C,\Lambda(q)[p])=0$ for all $p\leq d$. The result then follows from the long exact sequence obtained by applying $\Hom_{\DM(S,\Lambda)}(-,\Lambda(q)[p])$ to $M\to N\to C$.
\end{proof}

\subsection{The weight filtration of $\PConf_n(\A^d)$}

We will need to compute the weight filtration of the scheme $\PConf_n(\A^d)$. For this purpose, we introduce a definition. We take $\alpha$ a positive rational number and we write $\alpha=p/q$ with $p$ and $q$ two positive coprime integers.

\begin{defi}
We say that an object $M\in\DM(S,\Lambda)$ is $\alpha$-pure if the following two conditions are satisfied.
\begin{itemize}
\item If $n$ is not a multiple of $q$, then the map
\[B^*(w_{\geq n+1}(M))\to B^*(w_{\geq n}(M))\]
is an isomorphism.
\item If $n$ is a multiple of $q$ then the map
\[B^*(w_{\geq n}(M))\to B^*(M)\]
exhibits $B^*(w_{\geq n}(M))$ as the $\alpha n$-connective cover of $B^*(M)$.
\end{itemize}
\end{defi}

In other words, an object is $\alpha$-pure if, up to rescaling by $\alpha$, the weight filtration induces the Postnikov filtration upon application of the Betti realization functor. Observe that if $q\neq 1$, then, $B^*(M)$ has homology concentrated in degrees that are multiples of $p$.

\begin{example}
Take $M=M(\mathbf{P}^n)$. Then it is a classical computation that
\[M=\Lambda(0)\oplus\Lambda(1)[2]\oplus\ldots\oplus\Lambda(n)[2n],\]
and it follows that $M$ is $2$-pure.

For an example where $\alpha$ is not an integer, take $M=M(\A^d-\{0\})$. Then one has
\[M=\Lambda(0)\oplus\Lambda(d)[2d-1]\]
and we easily see that $M$ is $(\frac{2d-1}{d})$-pure.
\end{example}

\begin{prop}
\label{p:pure}
Write $\alpha=p/q$ with $p$ and $q$ two coprime positive integers.
\begin{enumerate}
\item If $M$ is $\alpha$-pure, then $B^*(w_{qm}(M))\cong \H_{ p m}(B^*(M))[p m]$ and $B^*(w_{n}(M))=0 $ if $n$ is not a multiple of $q$.
\item If $M$ is $\alpha$-pure, then $M(q)[p]$ is also $\alpha$-pure.
\item If $M$ and $P$ are $\alpha$-pure and $N\in \DM(\Lambda)$ fits in a cofiber sequence
\[M\to N\to P\]
then $N$ is also $\alpha$-pure.
\end{enumerate}
\end{prop}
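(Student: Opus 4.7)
The plan is to handle the three parts in order, starting with (1) which sets up the basic computation, then (2) which is a direct substitution, and (3) which is a five-lemma argument.

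For Part (1), I would first note that the vanishing of $B(w_n(X))$ for $n$ not a multiple of $q$ is immediate: $w_n(X)$ is by definition the cofibre of $w_{\geq n+1}(X) \to w_{\geq n}(X)$, and the first $\alpha$-purity condition says that this map becomes an isomorphism after applying $B$, so its Betti cofibre vanishes. For $n = qm$ a multiple of $q$, I would iterate the first condition to identify $B(w_{\geq qm+1}(X)) \cong B(w_{\geq qm+2}(X)) \cong \cdots \cong B(w_{\geq q(m+1)}(X))$, and then apply the second condition at the weights $qm$ and $q(m+1)$ to get $B(w_{\geq qm}(X)) \cong \tau_{\geq pm}B(X)$ and $B(w_{\geq q(m+1)}(X)) \cong \tau_{\geq p(m+1)}B(X)$. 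The Betti cofibre is therefore $\tau_{\leq p(m+1)-1}\tau_{\geq pm}B(X)$, which, by the observation recorded immediately before the definition that $H_\ast(X_{an})$ is concentrated in degrees divisible by $p$, collapses to $H_{pm}(X_{an})[pm]$ (using that a complex in $\D(\Lambda)$ with homology in a single degree is equivalent to that homology).

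For Part (2), the plan is to use the computation $w_{\geq n}(X(k)[l]) \cong w_{\geq n-k}(X)(k)[l]$, which follows from the fact that Tate twist by $k$ sends the generators $\Lambda(i)$ of $w_{\geq j}$ to the generators $\Lambda(i+k)$ of $w_{\geq j+k}$ while $[l]$ is weight-preserving. Combining this with the insensitivity of Betti realisation to Tate twists, $B(w_{\geq n}(X(k)[l])) \cong B(w_{\geq n-k}(X))[l]$, after which the two $\alpha$-purity conditions for the twisted/shifted object transfer by direct substitution from those for $X$: divisibility of $k$ by $q$ preserves which $n$ are multiples of $q$, and the choice of $l$ aligns the connective-cover degrees (using $\tau_{\geq r}(C)[l] \simeq \tau_{\geq r+l}(C[l])$).

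For Part (3), I would apply the right adjoint $w_{\geq n}$ to the cofibre sequence $X \to Y \to Z$. Since $w_{\geq n}$ is a right adjoint between stable $\infty$-categories, it preserves fibre sequences and hence cofibre sequences, so $w_{\geq n}(X) \to w_{\geq n}(Y) \to w_{\geq n}(Z)$ is a cofibre sequence; applying the exact functor $B$ yields a cofibre sequence in $\D(\Lambda)$ together with a compatible map to $B(X) \to B(Y) \to B(Z)$. For $n$ not a multiple of $q$, the outer vertical maps $B(w_{\geq n+1}) \to B(w_{\geq n})$ for $X$ and $Z$ are isomorphisms by $\alpha$-purity, and the triangulated five-lemma forces the middle map to be an isomorphism. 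For $n = qm$, the outer inclusions into $B(X)$ and $B(Z)$ are $pm$-connective covers, and a direct long exact sequence argument in homology shows that the middle inclusion $B(w_{\geq qm}Y) \to B(Y)$ is also a $pm$-connective cover: the middle is $pm$-connective by the long exact sequence, and agrees with $B(Y)$ above $pm$ by the five-lemma.

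The hard part will be Part (1), where the collapse of $\tau_{\leq p(m+1)-1}\tau_{\geq pm}B(X)$ to a single-degree piece depends crucially on the observation that $H_\ast(X_{an})$ is supported in multiples of $p$; one should verify that this observation is actually forced by the two $\alpha$-purity conditions under the coprimality hypothesis on $(p,q)$, since it is doing all the essential work. Parts (2) and (3) are essentially bookkeeping once one has the weight-twist compatibility (for (2)) and the right-adjoint preservation of cofibre sequences (for (3)).
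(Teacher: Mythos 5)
Your proof follows essentially the same route as the paper: iterate the two $\alpha$-purity conditions and identify cofibres for (1), commute $w_{\geq n}$ past twist-and-shift for (2), and exactness of $w_{\geq n}$ plus a degree-by-degree argument for (3). (For (3) the paper states a reformulation of $\alpha$-purity — $B(w_{\geq n})$ concentrated in degrees $\geq \lceil\alpha n\rceil$ and $B(w_{<n})$ in degrees $<\lceil\alpha n\rceil$ — and leaves the rest to the reader; your explicit five-lemma argument is the detail the paper elides, and it is fine.) However, two points you raise or skirt deserve comment.

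First, your suspicion about part (1) is well founded: the concentration of $\H_\ast(X_{an})$ in degrees divisible by $p$ is \emph{not} forced by the two $\alpha$-purity conditions, even with $\gcd(p,q)=1$. For instance $X=\Lambda(0)\oplus\Lambda(2)[3]\oplus\Lambda(2)[4]$ satisfies the definition with $\alpha=3/2$, yet $B(X)$ has homology in degree $4$, and $B(w_{2}(X))=\Lambda[3]\oplus\Lambda[4]$ is not concentrated in degree $pm=3$. So your computation correctly yields $B(w_{qm}(X))\cong\tau_{[pm,p(m+1))}B(X)$, but the further collapse to $\H_{pm}(X_{an})[pm]$ requires the concentration as an additional hypothesis (which does hold in the application to $\PConf_n(\A^d)$, where the cohomology is generated in degree $2d-1$); as stated, part (1) and the preceding remark are imprecise, and the paper's own proof has the same gap.

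Second, in part (2) the constraint you identify — that the twist $k$ must be divisible by $q$ and the shift $l$ must align the connective-cover degrees, i.e.\ $l=\alpha k$ — is satisfied by $(k,l)=(q,p)$, not by $(k,l)=(p,q)$ (the latter requires $q\mid p$, forcing $q=1$). The statement $X(p)[q]$ is a typo for $X(q)[p]$: this is what is actually used in the proof of Lemma \ref{lem:sPConf}, where (2) is applied to the cofibre sequence $M(Z)(d)[2d-1]\to M(X)\to M(Y)$ with $(q,p)=(d,2d-1)$. If you carry your substitution through, you will find exactly this mismatch; once corrected, your argument works.
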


\begin{proof}
We prove (1). If $n$ is not a multiple of $q$, then, by definition, the map 
\[B^*(w_{\geq n+1}(M))\to B^*(w_{\geq n}(M))\]
is an isomorphism, which implies that $B^*(w_{n}(M))=0$. Using Proposition \ref{prop : pure weight}, we deduce that $w_n(M)=0$. If $n=qm$, we apply $B^*$ to the cofiber sequence
\[w_{\geq qm+1}(M)\to w_{\geq qm}(M)\to w_{qm}(M).\]
By definition, $B^*(w_{\geq q m}(M))$ is the $pm$-connective cover of $B^*(M)$ and $B^*(w_{\geq q m+1}(M))$ is the $p(m+1)$-connective cover of $M$, it follows that $B^*(w_{qm}(M))\cong \H_{pm}(B^*(M))[pm]$.

The proof of (2) is elementary, once we observe that $w_{\geq n}(M(q))\cong w_{\geq n -q}(M)$.

In order to prove (3), we use an alternative characterization of $\alpha$-pure objects. An object $M$ of $\DM(\Lambda)$ is $\alpha$-pure if, for all $n$ in $\mathbb{Z}$, the homology of $B^*(w_{\geq n}(M))$ is concentrated in degrees $\geq \lceil \alpha n\rceil$ and the homology of $B^*(w_{<n}(M))$ is concentrated in degrees $<\lceil \alpha n\rceil$. Now, consider a cofiber sequence
\[M\to N\to P\]
with $M$ and $P$ $\alpha$-pure. This induces a cofiber sequence
\[B^*(w_{\geq n}(M))\to B^*(w_{\geq n}(N))\to B^*(w_{\geq n}(P))\]
with $B^*(w_{\geq n}(M))$ and $B^*(w_{\geq n}(P))$ concentrated in homological degrees $\geq \lceil \alpha n\rceil$. It follows that $B^*(w_{\geq n}(N))$ has homology concentrated in degrees $\geq \lceil \alpha n\rceil$. Similarly, $B^*(w_{< n}(N))$ has homology concentrated in degrees $<\lceil \alpha n\rceil$.
\end{proof}

\begin{lemm}
\label{lem:sPConf}
The object $B^*(w_k(M(\PConf_n(\A^d))))$ is trivial if $d$ does not divide $k$ and 
\[
B^*(w_{kd}(M(\PConf_n(\A^d)))) \cong \H_{k(2d-1)}(\PConf_n(\C^d))[k(2d-1)].
\]
\end{lemm}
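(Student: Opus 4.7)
My plan is to show that $M(\PConf_n(\A^d))$ is $\alpha$-pure for $\alpha=(2d-1)/d$, with coprime decomposition $p=2d-1$, $q=d$, and then to read off both conclusions directly from Proposition~\ref{p:pure}(1).

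I would prove $\alpha$-purity by a nested induction parallel to the one used in the earlier proof that $M(\PConf_n(X))$ is mixed Tate. Concretely, I induct on the pair $(n,|P|)$ in lexicographic order to show that $M(\PConf_{n,P}(\A^d))$ is $\alpha$-pure for every subset $P\subseteq P(n)$. The base case $|P|=0$ gives $(\A^d)^n$, whose motive is $\Lambda(0)$ and hence trivially $\alpha$-pure. For the inductive step, pick $(i,j)\in P$ and set $Q=P\setminus\{(i,j)\}$; then $\PConf_{n,P}(\A^d)$ is the open complement in $\PConf_{n,Q}(\A^d)$ of the codimension-$d$ smooth closed subscheme $\{x_i=x_j\}\cong\PConf_{n-1,P'}(\A^d)$ for a suitable $P'\subset P(n-1)$. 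Rotating the resulting purity triangle one step produces a cofiber sequence
\[
M(\PConf_{n-1,P'}(\A^d))(d)[2d-1]\;\to\; M(\PConf_{n,P}(\A^d))\;\to\; M(\PConf_{n,Q}(\A^d)),
\]
whose right-hand term is $\alpha$-pure by the induction on $|P|$, and whose left-hand term is $\alpha$-pure by combining the induction on $n$ with Proposition~\ref{p:pure}(2) applied to the twist $(d)[2d-1]$, which lines up with the atomic $\alpha$-pure building block $\Lambda(q)[p]$. Proposition~\ref{p:pure}(3) then gives $\alpha$-purity of the middle term, closing the inductive step.

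Setting $P=P(n)$ at the end of the induction gives $\alpha$-purity of $M(\PConf_n(\A^d))$. Proposition~\ref{p:pure}(1) now delivers both parts of the lemma: $w_k(\PConf_n(\A^d))=0$ when $d\nmid k$, and $B(w_{kd}(\PConf_n(\A^d)))\cong \H_{k(2d-1)}(\PConf_n(\C^d))[k(2d-1)]$. The motivic identification on the right-hand side of the lemma is obtained from this Betti computation via the structural statement that $w_k(X)\cong A(k)$ with $A\in\D(\Lambda)$ when $X$ is mixed Tate, which pins down $A$ up to the Tate twist from its Betti realization.

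The main point requiring attention is the bookkeeping around the twist: the purity triangle naturally produces $M(D)(d)[2d]$, but after one rotation it is $M(D)(d)[2d-1]$ that appears in the cofiber sequence, and this shift coincides precisely with $\Lambda(q)[p]=\Lambda(d)[2d-1]$, the first non-trivial atomic building block of a $(2d-1)/d$-pure object. It is exactly this numerical coincidence that keeps the cofiber sequence inside the subcategory of $\alpha$-pure motives; everything else is formal.
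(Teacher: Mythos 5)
Your proposal is correct and takes essentially the same route as the paper: reduce to showing $M(\PConf_n(\A^d))$ is $\frac{2d-1}{d}$-pure, then read off the statement from Proposition~\ref{p:pure}(1) together with the structural fact $w_k(X)\cong A(k)$, $A\in\D(\Lambda)$.

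The one difference worth naming is the inductive bookkeeping. The paper proves the more general statement that the complement of a ``good arrangement'' of codimension-$d$ affine subspaces is $\frac{2d-1}{d}$-pure, inducting on the number of subspaces, with the closed stratum $Z = V_i\cap Y$ again a good-arrangement complement in $V_i\cong\A^{N-d}$. You instead recycle the $(n,|P|)$-induction used earlier in the paper to prove $M(\PConf_n(X))$ is mixed Tate, identifying the closed stratum directly as $\PConf_{n-1,P'}(\A^d)$. The two inductions are logically equivalent; yours is arguably tidier since it does not require introducing or checking the ``good arrangement'' condition, while the paper's buys a statement that applies to a larger class of subspace arrangements. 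Both hinge on the same cofiber sequence $M(Z)(d)[2d-1]\to M(\PConf_{n,P})\to M(\PConf_{n,Q})$ and on Proposition~\ref{p:pure}(2)--(3).

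Minor note: as written, Proposition~\ref{p:pure}(2) reads ``$X(p)[q]$ is $\alpha$-pure,'' but the twist that preserves $\alpha$-purity (and the one appearing in the Gysin triangle and in the example $M(\A^d-\{0\})=\Lambda(0)\oplus\Lambda(d)[2d-1]$) is $(q)[p]=(d)[2d-1]$. Your proof uses the correct twist, so this is a typo in the paper rather than a gap in your argument, but it is worth being aware of when citing that proposition.
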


\begin{proof}
By (1) of the previous proposition, it suffices to prove that the motive of $\PConf_n(\A^d)$ is $(\frac{2d-1}{d})$-pure. First, we recall a definition from \cite[Definition 7.5]{ciricietale}. We say that a finite collection $\{V_i\}_{i\in I}$ of codimension $d$ linear subspaces of an affine space $\mathbf{A}^r$ is a good arrangement of codimension $d$ subspaces if for each $J\subset I$, the codimension of the intersection $\bigcap_{j\in J}V_j$ is a multiple of $d$. Then one checks easily that the arrangement of diagonals in $(\A^d)^n$ is a good arrangement of codimension $d$ subspaces. We shall prove that the lemma holds in this more general situation.

So let us consider $X=\mathbf{A}^r-\bigcup_{i\in I}V_i$ the complement of a good arrangement of codimension $d$ subspaces. We proceed by induction on the cardinality of $I$. The result is obvious if $I$ is empty. Now assume that $I=J\sqcup \{i\}$. Let us denote by $Y=\mathbf{A}^r-\bigcup_{j\in J}V_j$. Then, $X$ is an open subset of $Y$ whose closed complement, denoted $Z$, is either empty or the complement of a good arrangement of codimension $d$ subspaces. We have a cofiber sequence
\[M(Z)(d)[2d-1]\to M(X)\to M(Y)\]
and the result follows from the induction hypothesis and (2) and (3) of Proposition \ref{p:pure}.
\end{proof}

\subsection{The main theorem}

\begin{theo}
\label{main-theorem}
The map
\[\H^{p,q}(\widetilde{\Conf}_{n+1}(\A^d),\Lambda)\to \H^{p,q}(\widetilde{\Conf}_{n}(\A^d),\Lambda)\]
is an isomorphism for $p\leq n/2 - 1$ and a surjection for $p\leq n/2$.
\end{theo}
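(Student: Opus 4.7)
The plan is to consider the cofiber $C_n$ in $\DMT(S,\Lambda)$ of the stabilization map $M(\widetilde{\Conf}_n(\A^d))\to M(\widetilde{\Conf}_{n+1}(\A^d))$ and to show that $\H^{p,q}(C_n,\Lambda)=0$ for $p\le n/2$, which is equivalent to the stated isomorphism. Both source and target lie in $\DMT(S,\Lambda)$: the stacky quotient $\widetilde{\Conf}_n(\A^d)$ has motive equal to the homotopy colimit of $[k]\mapsto M(\PConf_n(\A^d)\times\Sigma_n^k)$, each term of which is mixed Tate (by the earlier proposition on configuration motives of mixed Tate schemes), and $\DMT(S,\Lambda)$ is closed under homotopy colimits. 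Hence $C_n\in\DMT(S,\Lambda)$. The strategy is to apply Proposition~\ref{p:connectivity} to the shifted object $C_n[-\lfloor n/2\rfloor-1]$: this reduces the theorem to showing that for every integer $k$, the chain complex $B\circ w_k(C_n)$ has homology concentrated in degrees $\ge \lfloor n/2\rfloor+1$.

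The next step is to identify the weight pieces of $M(\widetilde{\Conf}_n(\A^d))$. By Lemma~\ref{lem:sPConf} the motive $M(\PConf_n(\A^d))$ is $(2d-1)/d$-pure, and the stabilization map on $\PConf$ is $\Sigma_n$-equivariant by Lemma~\ref{lemma-equivariance}. The weight filtration is functorial and hence $\Sigma_n$-equivariant, and taking $\Sigma_n$-homotopy orbits of it produces a filtration of $M(\widetilde{\Conf}_n(\A^d))$ whose $k$-th graded piece is a homotopy colimit of objects of pure weight $kd$ (and vanishes when $d\nmid k$), hence is itself pure of weight $kd$; by the characterization of the weight filtration this must agree with the intrinsic weight filtration of $M(\widetilde{\Conf}_n(\A^d))$. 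Combining Fact~\ref{fact-Betti-realisation}, Proposition~\ref{p:pure}(1) and the compatibility of Betti realization with homotopy colimits yields
\[
B\bigl(w_{kd}(M(\widetilde{\Conf}_n(\A^d)))\bigr) \;\simeq\; \bigl(\H_{k(2d-1)}(\PConf_n(\C^d);\Lambda)\bigr)_{h\Sigma_n}[k(2d-1)],
\]
where $(-)_{h\Sigma_n}$ denotes $\Sigma_n$-homotopy orbits in $\D(\Lambda)$ with the natural permutation action, and similarly for $n+1$. Hence $B(w_{kd}(C_n))$ is the $[k(2d-1)]$-shift of the cofiber of the twisted stabilization map $\bigl(\H_{k(2d-1)}(\PConf_n(\C^d);\Lambda)\bigr)_{h\Sigma_n}\to\bigl(\H_{k(2d-1)}(\PConf_{n+1}(\C^d);\Lambda)\bigr)_{h\Sigma_{n+1}}$, whose homology is the twisted group-homology stabilization $\H_*(\Sigma_n;\H_{k(2d-1)}(\PConf_n(\C^d);\Lambda))\to \H_*(\Sigma_{n+1};\H_{k(2d-1)}(\PConf_{n+1}(\C^d);\Lambda))$.

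The third and crucial step is to control the connectivity of this cofiber. Here I would invoke the twisted homological stability Lemma~\ref{lem:twisted-homstab} advertised in the introduction: it should say that this twisted stabilization map is an isomorphism in degrees $*\le \lfloor n/2\rfloor-k(2d-1)$ (the bound being vacuous once it drops below zero, for large $k$). The cofiber then has homology in degrees $>\lfloor n/2\rfloor-k(2d-1)$, and after the shift by $[k(2d-1)]$ it has homology in degrees $\ge\lfloor n/2\rfloor+1$, uniformly in $k$ -- precisely the bound demanded by the first paragraph. Feeding this into Proposition~\ref{p:connectivity} completes the argument. The hard part is twofold. First, one must carefully justify that the weight filtration of $M(\widetilde{\Conf}_n(\A^d))$ coincides with the $\Sigma_n$-homotopy orbits of the equivariant weight filtration of $M(\PConf_n(\A^d))$: this is not automatic because the functors $w_{\ge n}$ are defined as right adjoints and thus only commute with limits a priori, and it is the purity of $M(\PConf_n(\A^d))$ that makes the argument go through. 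Second, establishing Lemma~\ref{lem:twisted-homstab} is a genuine topological statement about the homology of symmetric groups with the polynomial coefficient systems $n\mapsto \H_i(\PConf_n(\C^d);\Lambda)$, whose stability range must scale correctly with the degree of the coefficient system; this is the non-formal input, for which one expects to adapt the machinery of Randal-Williams--Wahl or a direct cellular argument.
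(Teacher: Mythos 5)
Your proposal follows essentially the same route as the paper: reduce via Proposition~\ref{p:connectivity} to a statement about the weight-graded pieces, identify $B(w_{kd})$ of the (stacky) configuration motive via Lemma~\ref{lem:sPConf} as a shifted twisted group-homology term, and conclude with the twisted homological stability Lemma~\ref{lem:twisted-homstab}. The only cosmetic differences are that the paper works directly with ``$n/2$-connectedness'' of the map rather than passing through a shifted cofiber, it dispatches the commutation of $w_a$ with homotopy orbits by simply noting that $w_a$ preserves colimits, and its version of Lemma~\ref{lem:twisted-homstab} yields the slightly stronger range $q\leq n/2 - k$ (via the substitution $r=k(2d-1)$, $m=2d$), of which your anticipated range $q\leq \lfloor n/2\rfloor - k(2d-1)$ is exactly the minimal case needed.
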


\begin{proof}
By Corollary \ref{c:connectivity}, it suffices to prove that, for each $a$, the stabilization map
\[
B^*(w_a(M(\widetilde{\Conf}_{n}(\A^d)))) \to B^*(w_a(M(\widetilde{\Conf}_{n+1}(\A^d))))
\]
induces isomorphisms on homology in degrees $\leq n/2$. We observe that
\begin{align*}
B^*(w_a(M(\widetilde{\Conf}_{n}(\A^d))))
&= B^*(w_a(M([\PConf_{n}(\A^d)/\Sigma_n]))) && \text{by definition} \\
&\cong B^*(w_a(M(\PConf_{n}(\A^d))_{h\Sigma_n})) && \text{by Lemma \ref{lem:motive-of-stacky-quotient}} \\
&\cong B^*(w_a(M(\PConf_{n}(\A^d)))_{h\Sigma_n}) && \\
&\cong B^*(w_a(M(\PConf_{n}(\A^d))))_{h\Sigma_n} &&
\end{align*}
where the last two equivalences hold since $B^*$ and $w_a$ both commute with homotopy colimits; for $B^*$ this is because it is a left adjoint, for $w_a$ this follows from Lemma \ref{lem:weight-filtration-commutes-with-colimits}. This is the zero object whenever $d$ does not divide $a$, by Lemma \ref{lem:sPConf}, so we may assume that $a=kd$.

By Lemma \ref{lem:sPConf} and the spectral sequence $\H_*(G;\H_*(C)) \Rightarrow \H_*(C_{hG})$ for a $G$-equivariant chain complex $C$, we have
\[
\H_l(B^*(w_{kd}(M(\PConf_n(\A^d))))_{h\Sigma_n}) \cong \H_{l-k(2d-1)}(\Sigma_n , \H_{k(2d-1)}(\PConf_n(\C^d))),
\]
so it suffices to prove that the map
\[ \H_{l-k(2d-1)}(\Sigma_n , \H_{k(2d-1)}(\PConf_n(\C^d))) \to \H_{l-k(2d-1)}(\Sigma_{n+1} , \H_{k(2d-1)}(\PConf_{n+1}(\C^d))) \]
is an isomorphism in the range $l \leq n/2$. By Lemma \ref{lem:twisted-homstab} below, this map is an isomorphism in the range
\[
l \leq n/2 +k(2d-2).
\]
Note that $k(2d-2) \geq 0$, since $k\geq 0$ and $d\geq 1$, so we are done.
\end{proof}

\begin{lemm}
\label{lem:twisted-homstab}
For any dimension $m \geq 2$ and coefficient ring $\Lambda$, the stabilization map
\[ \H_q (\Sigma_n , \H_r(\PConf_n(\R^m);\Lambda)) \to \H_q (\Sigma_{n+1} , \H_r(\PConf_{n+1}(\R^m);\Lambda)) \]
is an isomorphism for $q \leq \frac{n}{2} - \frac{r}{m-1}$.
\end{lemm}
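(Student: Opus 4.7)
My plan is to exploit the explicit description of $\H_*(\PConf_n(\R^m);\Z)$ due to Arnold and F.~Cohen in order to decompose the coefficient system $n \mapsto \H_r(\PConf_n(\R^m);\Lambda)$ as a direct sum of induced representations, after which Shapiro's lemma reduces the twisted stability question to Nakaoka's classical stability theorem for the symmetric groups with constant coefficients.

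First, I would recall that for $m \geq 2$ the integral homology $\H_*(\PConf_n(\R^m);\Z)$ is free abelian and concentrated in degrees that are multiples of $m-1$, and that $\H_{k(m-1)}(\PConf_n(\R^m);\Z)$ carries a $\Sigma_n$-equivariant basis each of whose elements has a canonical ``support'' $S \subseteq \{1,\ldots,n\}$ with $|S| \leq 2k$, on which $\Sigma_n$ acts by relabelling supports. Grouping by the exact support and then by $s = |S|$ gives a $\Sigma_n$-equivariant decomposition
\[
\H_{k(m-1)}(\PConf_n(\R^m);\Lambda) \;\cong\; \bigoplus_{s=0}^{2k} \mathrm{Ind}_{\Sigma_s \times \Sigma_{n-s}}^{\Sigma_n}(V_s),
\]
where $V_s$ is a $\Lambda[\Sigma_s]$-module depending only on $s$, and $\Sigma_{n-s}$ acts trivially on $V_s$. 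I would then verify that the classical stabilization map $\PConf_n(\R^m) \hookrightarrow \PConf_{n+1}(\R^m)$ (adding a point ``near infinity'') respects this decomposition, acting on the $s$-summand as the map of induced representations associated to the subgroup inclusion $\Sigma_s \times \Sigma_{n-s} \hookrightarrow \Sigma_s \times \Sigma_{n+1-s}$. On cohomology this comes down to the observation that the pullback of the Arnold class $e_{i,n+1}$ is null-homotopic (the direction from $x_i$ to a very distant point is asymptotically constant), so dually on homology a class of support $S \subseteq \{1,\ldots,n\}$ is sent to the corresponding class of support $S \subseteq \{1,\ldots,n+1\}$.

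Shapiro's lemma then gives
\[
\H_q(\Sigma_n;\mathrm{Ind}_{\Sigma_s \times \Sigma_{n-s}}^{\Sigma_n}(V_s)) \;\cong\; \H_q(\Sigma_s \times \Sigma_{n-s};V_s),
\]
and the triviality of the $\Sigma_{n-s}$-action lets the K\"unneth theorem express the right-hand side as $\H_q$ of the derived tensor product $C_*(\Sigma_s;V_s) \otimes^{L}_\Lambda C_*(\Sigma_{n-s};\Lambda)$. Nakaoka's stability theorem gives that $\H_b(\Sigma_{n-s};\Lambda) \to \H_b(\Sigma_{n+1-s};\Lambda)$ is an isomorphism for $b \leq (n-s)/2$, and this connectivity is preserved by the derived tensor product with the fixed complex $C_*(\Sigma_s;V_s)$. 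Taking the worst case $s = 2k$ across the finite direct sum yields the stated range $q \leq (n-2k)/2 = n/2 - r/(m-1)$.

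The main obstacle will be the careful setup of the support decomposition and the verification of its compatibility with the topological stabilization map; this is mostly a combinatorial matter of unpacking the Arnold--Cohen basis and tracking its behaviour under the embedding $\PConf_n(\R^m) \hookrightarrow \PConf_{n+1}(\R^m)$.
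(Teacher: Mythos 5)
Your proposal is correct, and it takes a genuinely different route from the paper. The paper proves that $T = \H_r(\PConf_{\bullet}(\R^m);\Lambda)$ is a polynomial $\mathrm{FI}\sharp$-module of degree at most $\lfloor 2r/(m-1)\rfloor$ (via the Fadell--Husseini presentation of $\H^*(\PConf_n(\R^m))$ and some dictionary between notions of polynomiality from \cite{djament2016} and \cite{churchellenbergfarb}), and then invokes the general twisted homological stability theorem of \cite[Theorem A]{palmer2018} as a black box with $M = \R^\infty$, $X = *$. Your argument instead constructs the decomposition $\H_r(\PConf_n(\R^m);\Lambda) \cong \bigoplus_{s\leq 2r/(m-1)} \mathrm{Ind}_{\Sigma_s\times\Sigma_{n-s}}^{\Sigma_n}(V_s)$ concretely, then pushes through Shapiro's lemma, the K\"unneth theorem and Nakaoka's untwisted stability. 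These two routes are linked at a deeper level: a direct-sum decomposition of exactly this form is the content of the Church--Ellenberg--Farb classification of $\mathrm{FI}\sharp$-modules, and the Shapiro/Nakaoka mechanism is what underlies Palmer's theorem in this degenerate case. Your write-up buys self-containedness and a more elementary derivation of the numerical range; the paper's route is shorter given the references and generalises beyond $\R^\infty$.

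Two remarks worth keeping in mind for your own verification step. First, the claim that the decomposition is a genuine $\Sigma_n$-equivariant direct sum (not merely a filtration) does follow from the Arnold--Cohen presentation: the defining relations ($A_{ij}^2 = 0$ and the three-term relations) are each homogeneous with respect to the grading of monomials by their support $S \subseteq \{1,\dots,n\}$, so the quotient ring, and dually the (free) homology, inherits the support grading; grouping by $|S|=s$ then literally gives the induced-representation summand. Second, in the last step you should make sure the claim that iso-on-$\H_b$-for-$b\leq N$ ``is preserved by'' $-\otimes^L C_*(\Sigma_s;V_s)$ really gives the full range $b \leq N$ rather than $b\leq N-1$; the cleanest argument is the naturality of the K\"unneth (or hyper-Tor) exact sequence plus the five lemma, since every term in $\H_b$ of the derived tensor product only involves $\H_j$ of the factor with $j \leq b$. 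With that in place, the worst case $s = 2r/(m-1)$ gives exactly $q \leq n/2 - r/(m-1)$, matching the paper.
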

\begin{proof}
For fixed $m \geq 2$, the assignment $S \mapsto \PConf_S(\R^m)$, where $S$ is a finite set, naturally extends to a functor $\mathrm{FI}\sharp \to \htop$, by \cite[Proposition 6.4.2]{churchellenbergfarb}, and hence the assignment
\[ S \mapsto \H_r(\PConf_S(\R^m);\Lambda) \]
extends to a functor $\mathrm{FI}\sharp \to \rmod{\Lambda}$. We will show that this functor is \emph{polynomial of degree $\leq 2r/(m-1)$}. This will imply the stated result, by \cite[Theorem A]{palmer2018} with $M=\R^\infty$ and $X=*$, since $\mathcal{B}(\R^\infty,*) \simeq \mathrm{FI}\sharp$. (Theorem A of \cite{palmer2018} is stated only in the case $\Lambda = \Z$, so that $\rmod{\Lambda}$ is the category of abelian groups, but the results of \cite{palmer2018} generalise immediately to any abelian category, including $\rmod{\Lambda}$.)

We now show that the functor $T = \H_r(\PConf_{\bullet}(\R^m);\Lambda) \colon \mathrm{FI}\sharp \to \rmod{\Lambda}$ is polynomial of degree at most $d = \lfloor 2r/(m-1) \rfloor$. Recall that this means that $\Delta^{d+1} T = 0$, where $\Delta$ is the operation on functors from $\mathrm{FI}\sharp$ to an abelian category defined in \cite[\S 3.1]{palmer2018}. In the terminology of \cite[\S 1]{djament2016} this is equivalent to saying that $T$ is \emph{strongly polynomial of strong degree at most $d$}, when considered as a functor on the subcategory $\Theta = \mathrm{FI} \subset \mathrm{FI}\sharp$. (Note that the operation $\Delta$ of \cite[\S 3.1]{palmer2018} corresponds to the operation $\delta_1$ of \cite[\S 1]{djament2016} and, in the case $\mathcal{M} = \Theta$ of \cite[\S 5.2]{djament2016}, it suffices to consider only $\delta_1$, rather than $\delta_a$ for all objects $a$ of $\mathcal{M}$, since $\Theta$ is generated as a monoidal category by the object $1$.) Thus \cite[Proposition 4.4]{djament2016} implies that it is equivalent to prove that $T|_{\mathrm{FI}}$ is \emph{generated in degrees at most $d$}. This, in turn, is equivalent, by \cite[Remark 2.3.8]{churchellenbergfarb}, to the condition that $H_0(T|_{\mathrm{FI}})_i = 0$ for all $i>d$, where $H_0$ is the left adjoint of the inclusion of $\mathrm{Fun}(\mathrm{FB},\rmod{\Lambda})$ into $\mathrm{Fun}(\mathrm{FI},\rmod{\Lambda})$, where $\mathrm{FB}$ is the category of finite sets and bijections. The proof of the implication $\text{(iv)} \Rightarrow \text{(iii)}$ in Theorem 4.1.7 of \cite{churchellenbergfarb} shows that this condition will hold as long as, for all $n\geq 0$, the $\Lambda$-module
\[ T(n) = \H_r(\PConf_n(\R^m);\Lambda) \]
is generated by at most $O(n^d)$ elements. It therefore remains to verify this last condition.

We first consider the case $\Lambda = \Z$. By \cite[Theorem V.4.1]{fadellhusseini}, the $\N$-graded ring
\[ \H^*(\PConf_n(\R^m);\Z) \]
is generated by $\binom{n}{2}$ elements, all in degree $m-1$, subject to certain relations (the cohomological Yang-Baxter relations). It follows that the abelian group $\H^r(\PConf_n(\R^m);\Z)$ is trivial unless $r = i(m-1)$, in which case it is generated by the set of (commutative) monomials of degree $i$ in $\binom{n}{2}$ variables. The number of such monomials is at most
\[ \textstyle\binom{n}{2}^i \sim O(n^{2i}) = O(n^d), \]
and so $\H^r(\PConf_n(\R^m);\Z)$ is generated by at most $O(n^d)$ elements. Since the cohomology groups $\H^*(\PConf_n(\R^m);\Z)$ are free in all degrees, by \cite[Theorem V.1.1]{fadellhusseini}, and the space $\PConf_n(\R^m)$ has the homotopy type of a finite CW-complex (see \cite[\S VI.8--10]{fadellhusseini}), the universal coefficient theorem implies that the homology groups $\H_*(\PConf_n(\R^m);\Z)$ are also free in all degrees and $\H_r(\PConf_n(\R^m);\Z)$ has the same rank as $\H^r(\PConf_n(\R^m);\Z)$, so it is also generated by at most $O(n^d)$ elements.

Finally, going back to the case of an arbitrary ring $\Lambda$, the universal coefficient theorem implies that there is an isomorphism of $\Lambda$-modules
\[
\H_r(\PConf_n(\R^m);\Lambda) \cong \H_r(\PConf_n(\R^m);\Z) \otimes_\Z \Lambda ,
\]
so $\H_r(\PConf_n(\R^m);\Lambda)$ is generated as a $\Lambda$-module by at most $O(n^d)$ elements.
\end{proof}

\begin{rem}
In the proof above, we could alternatively have applied \cite[Theorem A]{randalwahlhomological} with $\mathcal{C} = \mathrm{FI}$, $A = \varnothing$, $X=\{*\}$ (where we may take $k=2$) and $N=0$, instead of \cite[Theorem A]{palmer2018} with $M=\R^\infty$ and $X=*$. However, in this way we could only have deduced the statement of the lemma in the smaller range of degrees $q \leq \frac{n}{2} - \frac{2r}{m-1} - 1$. This in turn would suffice for the proof of the main theorem (Theorem \ref{main-theorem}), \emph{except} in dimension $d=1$.

Another alternative to \cite[Theorem A]{palmer2018} would be \cite[Theorem 4.3]{betley}, \emph{if} the functor $\H_r(\PConf_{\bullet}(\R^m);\Lambda) \colon \mathrm{FI}\sharp \to \rmod{\Lambda}$ could be extended further to the category of finite sets and all partially-defined functions (not just partially-defined injections). However, there does not seem to be a natural extension to a functor on this larger category.
\end{rem}

\bibliographystyle{alpha}
\bibliography{biblio}

\end{document}